%% file: siamart_NatDRM.tex
\documentclass[review,hidelinks,onefignum,onetabnum]{siamart251216}

\input{ex_shared}

\ifpdf
\hypersetup{
  pdftitle={Penalty-Free Natural Deep Ritz Method Based on de Rham Complex for High-Dimensional Dirichlet Boundary Value Problems},
  pdfauthor={J. Chen, X. Ji, H. Yu, and S. Zhang}
}
\fi

\usepackage{graphicx,epstopdf} 
\usepackage[caption=false]{subfig}
\usepackage{booktabs}

\begin{document}

\maketitle

\begin{abstract}
Deep neural networks show great promise for high-dimensional PDEs, yet enforcing essential boundary conditions remains challenging, especially as penalty parameters require problem-specific retuning with increasing dimensionality. In this work, we extend the Natural Deep Ritz Method (NatDRM) [H. Yu and S. Zhang, J. Comput. Phys., 537 (2025)] to a unified framework for all dimensions \(d \geq 2\) based on the de Rham complex and its penalty-free boundary decomposition: curl-type operators act on scalar potentials in 2D, vector potentials in 3D, and antisymmetric second-order tensor potentials in \(d \geq 4\), respectively. This method converts Dirichlet constraints into three coupled natural (Neumann-type) subproblems with corresponding Ritz-type losses, eliminating the need for a boundary penalty parameter \(\beta\). We derive dimension-unified discrete losses, lightweight boundary-based gauge-fixing regularizations to resolve curl-kernel non-uniqueness, and a joint training procedure; extensions to variable-coefficient elliptic and semilinear Poisson problems are formulated at the first subproblem level. Numerical experiments on smooth benchmarks up to 6D show that NatDRM, without any penalty tuning, matches or exceeds the accuracy of optimally tuned DRM and PINN in most cases. It converges stably in 6D where penalized DRM fails for most penalty values, and exhibits synchronous decay of interior and boundary errors, resolving the inherent imbalance of penalty-based methods.
\end{abstract}

\begin{keywords}
High dimensional PDE, Deep neural network, Essential boundary value problem, Deep Ritz method
\end{keywords}

\begin{MSCcodes}
65N30, 65N12, 68T07, 35J25, 58A15
\end{MSCcodes}

\section{Introduction}

In recent years, the use of deep neural networks (DNNs) for solving partial differential equations (PDEs) has emerged as one of the most active research directions in computational science \cite{1,2,3,4,5}. Traditional numerical methods, such as finite element and finite difference schemes, while robust and theoretically well-founded, often suffer from the curse of dimensionality, wherein computational cost grows exponentially with the dimension of the problem domain \cite{6,7}. In contrast, deep learning-based approaches leverage the compositional structure of DNNs to mitigate this curse, enabling the treatment of high-dimensional PDEs previously deemed intractable. Foundational works, including the deep Ritz method (DRM) \cite{8}, physics-informed neural networks (PINNs) \cite{10}, deep Galerkin method (DGM) \cite{11}, and weak adversarial networks (WANs) \cite{12}, have achieved remarkable success in approximating solutions to both linear and nonlinear PDEs in high-dimensional spaces.

Despite these advances, a persistent challenge in the application of neural networks to boundary value problems is the rigorous enforcement of essential boundary conditions. In this paper, we focus on essential boundary value problems. As a representative example, consider the Poisson equation with Dirichlet boundary conditions defined on an arbitrary domain \(\Omega \subset \mathbb{R}^d\):
\begin{align}\label{1}
\left\{
\begin{aligned}
-\Delta u -f&= 0 && \text{in } \Omega, \\
u -g&= 0 && \text{on } \Gamma = \partial\Omega.
\end{aligned}
\right.
\end{align}
Unlike mesh-based methods where boundary conditions are enforced nodally, DNNs represent global trial functions. Consequently, imposing Dirichlet conditions often relies on penalty methods \cite{10,11}, distance-based conforming constructions \cite{13}, or Lagrange multiplier frameworks. DRM is a widely adopted penalty based approach that addresses this issue by minimizing the following objectives:
\[\mathcal{L}_{\text{DRM}}(u) = \sum_{x_j\in D}\left[\frac{1}{2}|\nabla u(x_j)|^2 - f(x_j)u(x_j)\right] + \beta \sum_{x_j\in D_\Gamma}(u(x_j) - g(x_j))^2,\]
where \(D\) and \(D_\Gamma\) denote the sets of training data points sampled in the interior of the domain and on the boundary, respectively, and \(\beta>0\) is a penalty parameter. PINNs also handle boundary conditions via a penalty strategy, with the loss function given by
\[\mathcal{L}_{\text{PINN}}(u) = \sum_{x_j\in D}|\Delta u(x_j) + f(x_j)|^2 + \beta \sum_{x_j\in D_\Gamma}(u(x_j) - g(x_j))^2.\] 
While effective, these strategies introduce practical complications: penalty methods require sensitive hyperparameter tuning and can induce numerical stiffness, whereas conforming methods demand careful construction of geometry-aware distance functions that become nontrivial in complex or high-dimensional domains \cite{14}. This challenge is particularly acute in high-dimensional settings, where the boundary geometry becomes complex and the surface-to-volume ratio of the domain changes drastically, making boundary sampling and penalty balancing increasingly delicate.

To address these limitations, a novel framework termed the natural deep Ritz method (NatDRM) was recently introduced for essential boundary value problems \cite{15}. The core innovation of NatDRM is that it transforms the original essential elliptic boundary value problem into a sequence of purely natural elliptic boundary value problems, rather than enforcing boundary conditions via penalty terms. All resulting subproblems remain well-posed elliptic problems with matching elliptic regularity, and their positive-definite structure preserves the inherent numerical stability of the original formulation. This approach, which is rooted in an adjoint formulation, leverages the mathematical structure of the de Rham complex and its dual \cite{16,17,18,19} to convert the discrepancy between the desired Dirichlet solution and its natural counterpart into auxiliary elliptic subproblems. It eliminates the need for penalty parameter tuning and avoids introducing artificial stiffness, thereby streamlining the optimization process and enhancing numerical robustness.

The main idea and core methodology of NatDRM have been fully illustrated and validated via two-dimensional model problems in \cite{15}, while its generalization to arbitrary dimension \(d\ge2\) calls for a consistent coordinate-free formulation. High-dimensional implementation introduces challenges beyond network width: boundary integrals, tensor-valued potentials with \(\binom{d}{2}\) components, and the coupling of three subproblems in training. Alternative boundary treatments in neural PDE solvers include distance-function ans\"atze \cite{13}, Nitsche-type weak boundary conditions, and Lagrange multipliers; finite element exterior calculus (FEEC) \cite{16,17} provides the operator calculus into which NatDRM embeds.

In this paper, we extend NatDRM to \(d\)-dimensional domains, give dimension-unified weak forms and discrete losses, and report numerical studies up to \(d=6\). Our focus is penalty-free essential boundary conditions rather than a complete survey of high-dimensional solvers (e.g., DGM \cite{11}, WAN \cite{12}).

This work makes three main contributions, establishing a unified penalty-free NatDRM for Dirichlet boundary value problems applicable to any dimension \(d \geq 2\), and resolving the core geometric and computational challenges introduced by dimensional scaling.

(1) Unified continuous formulation via de Rham complex geometry

We develop a dimension-agnostic natural boundary decomposition for Poisson-type problems, extending the 2D NatDRM theory through a consistent geometric framework that transitions seamlessly from scalar potentials (2D) to vector potentials (3D) and antisymmetric second-order tensor potentials (\(d \geq 4\)). We rigorously prove the decomposition correctness for all dimensions, addressing two critical theoretical barriers: the non-injectivity of the curl operator (which introduces a non-trivial kernel for the second subproblem in \(d \geq 3\)) and the geometric complexity jump from vector to tensor-valued potentials in \(d \geq 4\).

(2) Stable discrete NatDRM framework with gauge-fixing and general extensions

We derive a comprehensive suite of dimension-unified Ritz-type loss functions equipped with both boundary terms and tailored gauge-fixing regularizations. The proposed lightweight boundary-based mean regularization effectively resolves the non-uniqueness arising from the kernel of the curl operator, without introducing any additional tunable parameters. We design and analyze joint and alternating training strategies, and extend the framework to variable-coefficient elliptic and semilinear Poisson problems.

(3) High-dimensional validation with integration-accuracy tradeoff analysis

We present the first comprehensive numerical validation of NatDRM on 3D to 6D benchmark problems. A key focus is the optimal matching between integration precision and computational cost—a critical challenge in high dimensions where boundary sampling becomes sparse. Our results show that NatDRM achieves accuracy comparable to or better than optimally tuned DRM and PINN without any penalty tuning, maintains stable convergence in 6D where DRM fails for most penalty values, and exhibits synchronous decay of interior and boundary errors, eliminating the imbalance inherent in penalty-based methods.

The remainder of this paper is organized as follows. \Cref{sec:2} presents the natural formulation. \Cref{sec:3} details NatDRM losses and training. \Cref{sec:4} reports numerical experiments. \Cref{sec:5} concludes.

\section{A natural formulation for Poisson equations with Dirichlet boundary conditions}
\label{sec:2}

To demonstrate the main idea, we first focus on the Poisson equation \eqref{1} with Dirichlet boundary condition. We begin by establishing the necessary function space framework. $\Omega \subset \mathbb{R}^d$ stands for a simply connected domain with a boundary $\Gamma$, and we use $L^2(\Omega), H^1(\Omega), H_0^1(\Omega), H^{-1}(\Omega), H^{1/2}(\Gamma)\ \text{and}\ H^{-1/2}(\Gamma)$ for the standard Sobolev spaces. The variational formulation of \eqref{1} is to find $u \in H_g^1(\Omega) := \{ \omega \in H^1(\Omega) : \omega|_\Gamma =g \}$, such that
\begin{align}\label{2}
(\nabla u, \nabla v) = \langle f, v \rangle_{H^{-1}(\Omega) \times H^1_0(\Omega)},  \forall v \in H^1_0(\Omega).
\end{align}
Here, $\langle \cdot , \cdot \rangle_{H^{-1}(\Omega) \times H_0^1(\Omega)}$ denotes the duality pairing between $H^{-1}(\Omega)$ and $H_0^1(\Omega)$. Throughout this paper, we use $\langle \cdot , \cdot \rangle$ to represent dualities of various kinds, and we omit the subscripts when no ambiguity arises from the context.

In this section, we extend the two-dimensional NatDRM \cite{15} to three, four, and higher dimensions. The 2D theory is recalled without proof; 3D and \(d\ge4\) cases follow the same four-step reconstruction.

\subsection{Unified viewpoint and notation}
\label{sec:2-unified}

On \(\mathbb{R}^d\), the de Rham complex uses exterior derivative \(\mathrm{d}\) and codifferential \(\delta\), with \(\mathrm{d}^2=0\) and \(\delta^2=0\) \cite{16,17,18,19}. For a scalar potential \(u\), the gradient \(\nabla u\) is a 1-form; the discrepancy \(\nabla(u-\tilde u)\) is represented as \(\mathcal{C}\varphi\) for a curl-type potential \(\varphi\) whose tensor order depends on \(d\). Subproblem~1 is always a Neumann-compatible Poisson solve for \(\tilde u\); subproblem~2 recovers \(\varphi\) from boundary data of \(g-\tilde u\); subproblem~3 matches gradients and fixes the additive constant.

\begin{remark}[Relation to \cite{15}]\label{rem:rel15}
The four-step procedure in \cref{thm:2.2} and \cref{thm:2.3} generalizes \cite[Theorem~2.1]{15} from \(d=2\) to \(d\ge3\). New ingredients for \(d\ge3\) are the vector- or tensor-valued \(\varphi\), the \(H(\mathrm{curl},\Omega)\)-type space for \(\varphi\) when \(d\ge3\), and the corresponding boundary pairings in \eqref{13} and \eqref{19}.
\end{remark}

The de Rham complex framework admits a natural extension to weighted spaces for elliptic problems with non-constant coefficients. For a positive-definite symmetric matrix $\mathcal{A}(x)$, we can define a weighted inner product and corresponding weighted exterior derivative operators. This weighted structure underpins the variable-coefficient extension of NatDRM, as established in 2D in \cite{15}.

\begin{table}[htbp]
\centering
\caption{Dimension-specific operators and potentials.\label{tab:dimops}}
\small
\begin{tabular}{|c|c|c|}
\hline
\textbf{\(d\)} & \textbf{\(\varphi\)} & \textbf{\(\mathcal{C}=\mathrm{curl}\)}  \\
\hline
2 & scalar & \(\mathbb{R}^2\) vector \\
3 & vector & \(\mathbb{R}^3\) vector \\
\(\ge4\) & antisymmetric 2-tensor & \(\mathbb{R}^d\) vector \\
\hline
\end{tabular}
\end{table}

\subsection{Essential boundary value problems in 2D}

In the two-dimensional case, we denote the curl operators for scalar and vector functions by curl and rot, respectively. Namely, for a scalar function $w$, $\operatorname{curl} w(x, y) := [ \partial_y w , -\partial_x w ]^\top$, and for a vector function $v = [ v_1, v_2 ]^\top$, $\operatorname{rot} v := \partial_x v_2 - \partial_y v_1$. Note that $\operatorname{rot} \circ \operatorname{curl} = -\Delta$. For the 2D setting, the original essential boundary value problem is transformed into a sequence of boundary value problems as follows:

$\mathbf{Subproblem 1}$
\begin{equation}\label{3}
\left\{
\begin{aligned}
-\Delta \tilde{u} &= f, && \text{in } \Omega, \\
\frac{\partial \tilde{u}}{\partial \mathbf{n}} &= -\frac{1}{|\Gamma|} \int_{\Omega} f, && \text{on } \partial \Omega.
\end{aligned}
\right.
\end{equation}

$\mathbf{Subproblem 2}$
\begin{equation}\label{4}
\left\{
\begin{aligned}
-\Delta \varphi &= 0, && \text{in } \Omega, \\
\operatorname{curl} \varphi \cdot \mathbf{t} &= \partial_\mathbf{t} g - \partial_\mathbf{t}\tilde{u}, && \text{on } \partial \Omega.
\end{aligned}
\right.
\end{equation}

$\mathbf{Subproblem 3}$
\begin{equation}\label{5}
\left\{
\begin{aligned}
-\Delta u_c &= f, && \text{in } \Omega, \\
\frac{\partial u_c}{\partial \mathbf{n}} &= \partial_{\mathbf{n}} \tilde{u} - \partial_\mathbf{t} \varphi, && \text{on } \partial \Omega.
\end{aligned}
\right.
\end{equation}

Then $u_c$ is equal to $u$ up to a constant which can be fixed by the boundary condition.

\begin{lemma}[\cite{15}, Theorem 2.1.] Let $u$ be the solution of \eqref{2}, and $u^*$ be obtained by the four steps below:

1. Find $\tilde{u} \in H_\Gamma^1(\Omega) := \{ \omega \in H^1(\Omega) : \int_\Gamma \omega=0 \}$, such that
\begin{align}\label{6}
(\nabla \tilde{u}, \nabla v) = \langle \tilde{f}, v \rangle_{(H_{\Gamma}^{1}(\Omega))' \times H^1_\Gamma(\Omega)}, \ \forall v \in H^1_\Gamma(\Omega),
\end{align}
where $\tilde{f}$ is any extension of $f$ to $(H_{\Gamma}^{1}(\Omega))'$ such that $ \langle \tilde{f}, v \rangle =\langle f, v \rangle$ for $v \in H^1_0(\Omega)$.

2. Find a $ \varphi \in H^1(\Omega)$, such that
\begin{align}\label{7}
(\text{curl}\varphi , \text{curl} \psi)=\langle \partial_\mathbf{t}(g-\tilde{u}|_\Gamma),\psi\rangle_\Gamma,\ \forall \psi \in H^1(\Omega).
\end{align}
Here, $\langle \cdot , \cdot \rangle_\Gamma$ is a duality between $H^{-1/2}(\Gamma)$ and $H^{1/2}(\Gamma)$, which evaluates as the $L^2$ inner product on $\Gamma$ for sufficiently smooth functions.

3. Find a $ u_c \in H^1(\Omega)$, such that
\begin{align}\label{8}
(\nabla u_c, \nabla v)=(\nabla \tilde{u}-\text{curl}\varphi, \nabla v), \ \forall v \in H^1(\Omega).
\end{align}
4. Set $u^* =u_c-C$, with $C=\frac{1}{|\gamma|}\int_\gamma(u_c-g)$ for any $\gamma \subset \Gamma$ such that $|\gamma|\ne 0$.

Then $u^* =u$.
\end{lemma}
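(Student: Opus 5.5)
The plan is to show that $\nabla u_c=\nabla u$ in $L^2(\Omega)$. Once that holds, $u_c-u$ is constant on the connected domain $\Omega$. Since $u|_\Gamma=g$, that constant equals $\frac{1}{|\gamma|}\int_\gamma(u_c-g)=C$, so $u^*=u$. Step 4 is therefore immediate, and the work is to identify the $L^2$ vector field $\nabla\tilde u-\operatorname{curl}\varphi$ appearing in \eqref{8}.

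\textbf{Well-posedness of each step.}
\begin{itemize}
\item Problem \eqref{6} is coercive on $H^1_\Gamma(\Omega)$ by a Poincar\'e inequality with boundary mean, so Lax--Milgram applies.
\item For \eqref{7}, note that $(\operatorname{curl}\varphi,\operatorname{curl}\psi)=(\nabla\varphi,\nabla\psi)$ in 2D. It is a pure Neumann problem, so it is solvable if and only if the right-hand side vanishes on constants. This holds because $\langle\partial_\mathbf{t}(g-\tilde u),1\rangle_\Gamma=0$: the integral of a tangential derivative over the closed curve $\Gamma$ vanishes. Since $\Omega$ is simply connected, $\Gamma$ is a single closed curve; in general one argues via $H^{1/2}(\Gamma)\to H^{-1/2}(\Gamma)$ duality and density of smooth functions.
\item Problem \eqref{8} is Neumann-compatible trivially, since its right-hand side is a gradient pairing that vanishes for $v=1$.
\end{itemize}
So $\varphi$ and $u_c$ exist and are unique up to constants, and these constants do not affect $\operatorname{curl}\varphi$ or $\nabla u_c$.

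\textbf{Key step: a Helmholtz/de Rham argument.} Let $w:=u-\tilde u\in H^1(\Omega)$. I claim $\nabla w=\operatorname{curl}\varphi$. Testing \eqref{2} and \eqref{6} with $v\in H^1_0(\Omega)\subset H^1_\Gamma(\Omega)$, and using that $\tilde f$ extends $f$, gives $(\nabla w,\nabla v)=0$ for all $v\in H^1_0$. Hence $w$ is harmonic.

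In 2D on a simply connected domain, a divergence-free field is a curl, so I take a harmonic conjugate: there is $\phi\in H^1(\Omega)$ with $\nabla w=\operatorname{curl}\phi$. To justify this, note that $\operatorname{rot}(\operatorname{curl}\text{-rotated }\nabla w)$ equals $\operatorname{div}\nabla w=0$, and apply the exact sequence $H^1\xrightarrow{\operatorname{curl}}H(\operatorname{div})\xrightarrow{\operatorname{div}}L^2$ on simply connected domains.

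Next I check that $\phi$ satisfies \eqref{7}. For $\psi\in H^1(\Omega)$, integration by parts gives
\[
(\operatorname{curl}\phi,\operatorname{curl}\psi)=(\nabla w,\operatorname{curl}\psi)=\langle w,\operatorname{curl}\psi\cdot\mathbf n\rangle_\Gamma,
\]
using $\operatorname{div}\operatorname{curl}\psi=0$. With the paper's convention, $\operatorname{curl}\psi\cdot\mathbf n=\pm\partial_\mathbf t\psi$. Integrating by parts along the closed curve $\Gamma$ then gives $\mp\langle\partial_\mathbf t w,\psi\rangle_\Gamma$. Since $w|_\Gamma=g-\tilde u|_\Gamma$, this matches \eqref{7} once the orientation of $\mathbf t$ is fixed consistently; I would check the sign against the strong form \eqref{4}.

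By uniqueness of \eqref{7} up to constants, $\operatorname{curl}\varphi=\operatorname{curl}\phi=\nabla u-\nabla\tilde u$. Substituting into \eqref{8} gives $(\nabla u_c,\nabla v)=(\nabla u,\nabla v)$ for all $v\in H^1(\Omega)$. Taking $v=u_c-u$ yields $\nabla u_c=\nabla u$.

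\textbf{Main obstacle.} The delicate part is the boundary calculus at low regularity. This includes making sense of $\partial_\mathbf t(g-\tilde u|_\Gamma)\in H^{-1/2}(\Gamma)$, and the identity $\langle w,\operatorname{curl}\psi\cdot\mathbf n\rangle_\Gamma=-\langle\partial_\mathbf t w,\psi\rangle_\Gamma$ for $w,\psi\in H^1$. I would prove this identity first for smooth functions on a Lipschitz domain and extend by density, using continuity of the traces $H^1\to H^{1/2}(\Gamma)$ and of $\partial_\mathbf t: H^{1/2}(\Gamma)\to H^{-1/2}(\Gamma)$. Along the way I would fix the orientation sign once and for all.
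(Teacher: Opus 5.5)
Your architecture is the one the paper uses. The paper recalls this lemma from \cite{15} without proof, but its proofs of \cref{thm:2.2} and \cref{thm:2.3} follow your scheme exactly. Your well-posedness discussion, the stream-function construction on a simply connected domain, the density argument for the boundary pairing, and Step~4 are fine. The genuine problem is the sign, which you leave open and then get wrong at the decisive step. You conclude $\operatorname{curl}\varphi=\nabla u-\nabla\tilde u$. Substituting this into \eqref{8} gives $(\nabla u_c,\nabla v)=(2\nabla\tilde u-\nabla u,\nabla v)$, not $(\nabla u,\nabla v)$. What the lemma needs is $\operatorname{curl}\varphi=\nabla\tilde u-\nabla u=-\nabla w$. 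This is not cosmetic: whether the solution of \eqref{7} is $\phi$ or $-\phi$ depends on the orientation of $\mathbf t$. With the clockwise choice the lemma as stated is false, since then $\nabla u_c=2\nabla\tilde u-\nabla u$, which differs from $\nabla u$ unless $g-\tilde u|_\Gamma$ is constant.

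To repair it, fix $\mathbf t=(-n_2,n_1)$ (counterclockwise, $\Omega$ on the left), for which $\operatorname{curl}\psi\cdot\mathbf n=\partial_{\mathbf t}\psi$. Use the boundary identity from your last paragraph, $\langle w,\operatorname{curl}\psi\cdot\mathbf n\rangle_\Gamma=-\langle\partial_{\mathbf t}w,\psi\rangle_\Gamma$; note that it already contradicts your claim that $\phi$ solves \eqref{7}. It gives $(\operatorname{curl}\phi,\operatorname{curl}\psi)=-\langle\partial_{\mathbf t}(g-\tilde u|_\Gamma),\psi\rangle_\Gamma$. Hence $\varphi=-\phi$ up to a constant, $\operatorname{curl}\varphi=\nabla\tilde u-\nabla u$, and \eqref{8} yields $\nabla u_c=\nabla u$.

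Do not calibrate the orientation against \eqref{4}, as you propose. Its boundary condition $\operatorname{curl}\varphi\cdot\mathbf t=\partial_{\mathbf t}(g-\tilde u)$ is invariant under $\mathbf t\mapsto-\mathbf t$. Together with harmonicity it forces $\operatorname{curl}\varphi=+\nabla(u-\tilde u)$, which is incompatible with \eqref{8} for either orientation. The paper's proofs of \cref{thm:2.2} and \cref{thm:2.3} contain the same slip you reproduced. They set $\nabla(u-\tilde u)=\operatorname{curl}\varphi$ and then read $\nabla u_c=\nabla u$ off \eqref{14} and \eqref{20}, whose right-hand sides carry $-\operatorname{curl}\varphi$.
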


\subsection{Essential boundary value problems in 3D}

We use the div operator and curl operator for a vector function $\mathbf{v}=(v_1,v_2,v_3)^\top$,  $\text{div} \, \mathbf{v}=\nabla  \cdot (v_1,v_2,v_3) =\partial_1 v_1+\partial_2 v_2+\partial_3 v_3$ and 
$$\text{curl} \, \mathbf{v} = \nabla \times \mathbf{v} = \begin{pmatrix} \partial_2 v_3 - \partial_3 v_2 \\ \partial_3 v_1 - \partial_1 v_3 \\ \partial_1 v_2 - \partial_2 v_1 \end{pmatrix}.$$
Note that $\text{curl}\circ \nabla=0$ and $\text{div}\circ \text{curl}=0$.
For the 3D setting, the original essential boundary value problem is transformed into a sequence of boundary value problems as follows:

$\mathbf{Subproblem 1}$
\begin{equation}\label{9}
\left\{
\begin{aligned}
-\Delta \tilde{u} &= f, && \text{in } \Omega, \\
\frac{\partial \tilde{u}}{\partial \mathbf{n}} &= -\frac{1}{|\Gamma|} \int_{\Omega} f, && \text{on } \partial \Omega.
\end{aligned}
\right.
\end{equation}

$\mathbf{Subproblem 2}$
\begin{equation}\label{10}
\left\{
\begin{aligned}
\operatorname{curl} \operatorname{curl} \varphi &= 0, && \text{in } \Omega, \\
\operatorname{curl} \varphi \times \mathbf{n} &= \nabla_\Gamma (g - \tilde{u}|_\Gamma), && \text{on } \partial \Omega.
\end{aligned}
\right.
\end{equation}

$\mathbf{Subproblem 3}$
\begin{equation}\label{11}
\left\{
\begin{aligned}
-\Delta u_c &= f, && \text{in } \Omega, \\
\frac{\partial u_c}{\partial \mathbf{n}} &= \partial_{\mathbf{n}} \tilde{u} - \operatorname{curl} \varphi \cdot \mathbf{n}, && \text{on } \partial \Omega.
\end{aligned}
\right.
\end{equation}
Then $u_c$ is equal to $u$ up to a constant which can be fixed by the boundary condition.

\begin{theorem}\label{thm:2.2}
Let $u$ be the solution of \eqref{2}, and $u^*$ be obtained by the four steps below:

1. Find $\tilde{u} \in H_\Gamma^1(\Omega) := \{ \omega \in H^1(\Omega) : \int_\Gamma \omega=0 \}$, such that
\begin{align}\label{12}
(\nabla \tilde{u}, \nabla v) = \langle \tilde{f}, v \rangle_{(H_{\Gamma}^{1}(\Omega))' \times H^1_\Gamma(\Omega)}, \ \forall v \in H^1_\Gamma(\Omega),
\end{align}
where $\tilde{f}$ is any extension of $f$ to $(H_{\Gamma}^{1}(\Omega))'$ such that $ \langle \tilde{f}, v \rangle =\langle f, v \rangle$ for $v \in H^1_0(\Omega)$.

2. Find $\varphi \in \bigl(H^1(\Omega)\bigr)^3$ such that
\begin{align}\label{13}
(\text{curl}\varphi , \text{curl} \psi)=\langle \nabla_\Gamma(g-\tilde{u}|_\Gamma),\psi\rangle_\Gamma,\ \forall \psi \in \bigl(H^1(\Omega)\bigr)^3.
\end{align}
Here, $\langle \cdot , \cdot \rangle_\Gamma$ is a duality between $H^{-1/2}(\Gamma)$ and $H^{1/2}(\Gamma)$, which evaluates as the $L^2$ inner product on $\Gamma$ for sufficiently smooth functions.

3. Find a $ u_c \in H^1(\Omega)$, such that
\begin{align}\label{14}
(\nabla u_c, \nabla v)=(\nabla \tilde{u}-\text{curl}\varphi, \nabla v), \ \forall v \in H^1(\Omega).
\end{align}

4. Set $u^* =u_c-C$, with $C=\frac{1}{|\gamma|}\int_\gamma(u_c-g)$ for any $\gamma \subset \Gamma$ such that $|\gamma|\ne 0$.

Then $u^* =u$.
\end{theorem}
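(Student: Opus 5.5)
The plan is to show that $\nabla u = \nabla\tilde u - \operatorname{curl}\varphi$ holds in $(L^2(\Omega))^3$. Once this is established, \eqref{14} gives $\nabla u_c = \nabla u$. Since $\Omega$ is connected, $u_c - u$ is then a constant. On $\Gamma$ we have $u = g$, so $\frac{1}{|\gamma|}\int_\gamma (u_c - g)$ is exactly that constant, and step 4 yields $u^* = u$.

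\textbf{Well-posedness of the three subproblems.} Subproblem \eqref{12} is coercive on $H^1_\Gamma(\Omega)$ by a Poincar\'e--Friedrichs inequality, so Lax--Milgram applies. Subproblem \eqref{14} is a pure Neumann problem whose right-hand side vanishes on constants, so it is solvable up to a constant. Subproblem \eqref{13} is singular: the form $(\operatorname{curl}\cdot,\operatorname{curl}\cdot)$ has kernel $\nabla H^2(\Omega)$ (gradient fields) inside $(H^1)^3$. Solvability therefore requires the compatibility condition
\[
\langle \nabla_\Gamma(g-\tilde u|_\Gamma), \nabla\chi\rangle_\Gamma = 0
\]
for all such $\chi$. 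This holds because $\nabla_\Gamma w\cdot \nabla\chi$ pairs only with the tangential part $\nabla_\Gamma\chi$, and the surface integration by parts $\int_\Gamma \nabla_\Gamma w\cdot\nabla_\Gamma\chi = -\int_\Gamma w\,\Delta_\Gamma\chi$ is awkward to use directly. I expect this compatibility and kernel issue to be the main obstacle. The plan for handling it is the following: the correct reading of $\langle\nabla_\Gamma w,\psi\rangle_\Gamma$ is $\langle \nabla_\Gamma w, \psi_T\rangle$, which equals $-\langle w, \operatorname{div}_\Gamma \psi_T\rangle$. For a closed surface, $\operatorname{div}_\Gamma(\nabla\chi)_T = \Delta_\Gamma \chi$, and one then uses the identity $\operatorname{div}_\Gamma(\psi\times\mathbf n) = -\operatorname{curl}\psi\cdot\mathbf n$. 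For this reason I would rewrite the right-hand side of \eqref{13} as
\[
\langle \nabla_\Gamma w,\psi\rangle_\Gamma = \langle w, (\operatorname{curl}\psi)\cdot\mathbf n\rangle_\Gamma,
\]
up to orientation sign, where $w = g-\tilde u|_\Gamma$. This is Stokes' theorem on the closed surface $\Gamma$. In this form the right-hand side visibly vanishes on gradients, which gives compatibility, and $\operatorname{curl}\varphi$ is unique. Existence then follows from Lax--Milgram on the quotient space, using the Gaffney-type inequality on the complement of the kernel; this needs $\Omega$ to be a simply connected Lipschitz domain with connected boundary.

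\textbf{Key identity.} Let $e = u - \tilde u$. Then $e|_\Gamma = w + c_0$ for some constant $c_0$ (the constant is irrelevant after taking gradients). Testing \eqref{12} and \eqref{2} with $v\in H^1_0$ shows that $\Delta e = 0$ in $\Omega$. Now set
\[
\mathbf r := \nabla\tilde u - \operatorname{curl}\varphi - \nabla u = -\nabla e - \operatorname{curl}\varphi .
\]
I will show that $\mathbf r = 0$ by a Helmholtz decomposition argument. On a simply connected domain with connected boundary, any $L^2$ field that is orthogonal both to $\nabla H^1_0$ and to $\operatorname{curl}(H^1)^3$ vanishes.

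\textbf{Orthogonality to $\operatorname{curl}(H^1)^3$.} For every $\psi$, integrate by parts:
\[
(\nabla e, \operatorname{curl}\psi) = \langle e, \operatorname{curl}\psi\cdot\mathbf n\rangle_\Gamma = \langle w, \operatorname{curl}\psi\cdot\mathbf n\rangle_\Gamma ,
\]
where the constant $c_0$ drops out because $\int_\Gamma \operatorname{curl}\psi\cdot\mathbf n = 0$. By the rewritten form of \eqref{13}, this equals $(\operatorname{curl}\varphi,\operatorname{curl}\psi)$, once the sign convention is fixed consistently. Hence $(\mathbf r,\operatorname{curl}\psi) = 0$.

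\textbf{Orthogonality to $\nabla H^1_0$.} For $v\in H^1_0$, we have $(\nabla e,\nabla v) = 0$ because $e$ is harmonic, and $(\operatorname{curl}\varphi,\nabla v) = 0$ because $\operatorname{div}\operatorname{curl} = 0$ and $v$ vanishes on $\Gamma$. Hence $(\mathbf r,\nabla v) = 0$.

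\textbf{Conclusion.} The field $\mathbf r$ is orthogonal to both summands of the decomposition $L^2 = \nabla H^1_0 \oplus \operatorname{curl}(H^1)^3$. This decomposition is valid under the stated topology, via the vector-potential theorem of Girault--Raviart. Therefore $\mathbf r = 0$, which gives the key identity, and the argument in the first paragraph completes the proof.
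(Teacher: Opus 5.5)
Your architecture is fine: harmonicity of $e=u-\tilde u$, then showing $\mathbf r$ is orthogonal to $\nabla H^1_0$ and to $\operatorname{curl}(H^1)^3$. The step everything rests on, however, is false. The rewrite $\langle\nabla_\Gamma w,\psi\rangle_\Gamma=\pm\langle w,\operatorname{curl}\psi\cdot\mathbf n\rangle_\Gamma$ does not hold, and the two sides differ by a quarter turn, not by a sign. Integrating by parts on the closed surface gives
\[
\langle w,\operatorname{curl}\psi\cdot\mathbf n\rangle_\Gamma=\langle w,\operatorname{div}_\Gamma(\psi\times\mathbf n)\rangle_\Gamma=\langle\nabla_\Gamma w\times\mathbf n,\psi\rangle_\Gamma,
\qquad
\langle\nabla_\Gamma w,\psi\rangle_\Gamma=-\langle w,\operatorname{div}_\Gamma\psi_T\rangle_\Gamma .
\]
In passing from one to the other you replaced $\psi_T$ by $\psi\times\mathbf n$, which is $\psi_T$ rotated by $90^\circ$ in the tangent plane.

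The two functionals really differ. Test \eqref{13} with $\psi=\nabla\chi$. The left-hand side and $\langle w,\operatorname{curl}\psi\cdot\mathbf n\rangle_\Gamma$ both vanish. But $\langle\nabla_\Gamma w,\nabla\chi\rangle_\Gamma=\int_\Gamma\nabla_\Gamma w\cdot\nabla_\Gamma\chi$, which equals $\int_\Gamma|\nabla_\Gamma w|^2>0$ when $w$ is smooth and non-constant and $\chi$ is a smooth extension of $w$. So the compatibility condition you rightly singled out fails. \eqref{13} as printed has no solution unless $\nabla_\Gamma(g-\tilde u)=0$, and $(\mathbf r,\operatorname{curl}\psi)=0$ cannot be derived from it.

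Nor is the sign a convention you may choose. With outward $\mathbf n$ and your $\mathbf r=-\nabla e-\operatorname{curl}\varphi$, you need $(\operatorname{curl}\varphi,\operatorname{curl}\psi)=-\langle g-\tilde u,\operatorname{curl}\psi\cdot\mathbf n\rangle_\Gamma=\langle\mathbf n\times\nabla_\Gamma(g-\tilde u),\psi\rangle_\Gamma$. With the opposite sign you get $\operatorname{curl}\varphi=\nabla e$, and \eqref{14} yields $\nabla u_c=2\nabla\tilde u-\nabla u$.

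You are not missing an idea that the paper supplies. Its proof makes the same conflation at the step $\langle\nabla(u-\tilde u)\times\mathbf n,\psi\rangle_\Gamma=\langle\nabla_\Gamma(u-\tilde u),\psi\rangle_\Gamma$; in fact $\nabla v\times\mathbf n=\nabla_\Gamma v\times\mathbf n$, which is orthogonal to $\nabla_\Gamma v$. It also takes $\operatorname{curl}\varphi=\nabla(u-\tilde u)$, which is exactly the sign that makes \eqref{14} fail.

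Structurally the paper argues differently from you. It builds one potential with $\nabla(u-\tilde u)=\operatorname{curl}\varphi$ from the vector-potential theorem, checks by integration by parts that this $\varphi$ satisfies step 2, and feeds it into step 3. You instead take whatever $\varphi$ step 2 produces and kill the residual via $L^2=\nabla H^1_0\oplus\operatorname{curl}(H^1)^3$. Both rest on the same Girault--Raviart result.

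Once step 2 is corrected to the right-hand side $-\langle g-\tilde u,\operatorname{curl}\psi\cdot\mathbf n\rangle_\Gamma$ (the form of \eqref{19}, up to sign), your route is a complete proof. That includes your hypothesis that $\Gamma$ be connected, which is genuinely needed and does not follow from simple connectivity in 3D. As a proof of the statement with $\nabla_\Gamma(g-\tilde u|_\Gamma)$ on the right-hand side, however, it breaks at the rewrite.
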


\begin{proof}
By \eqref{2} and \eqref{12}, $(\nabla u-\nabla \tilde{u}, \nabla v)=0$ for any $v \in H^1_0(\Omega)$ and it follows that $\nabla (u- \tilde{u}) = \text{curl} \varphi$ for some $ \varphi \in H^1(\Omega)$. Further, $\text{curl}(\nabla (u- \tilde{u}))=\text{curl}\ \text{curl}\varphi=0$. Therefore, for any $ \psi \in H^1(\Omega)$, we have 
\begin{align*}
(\text{curl}\,\varphi, \text{curl}\,\psi) &= \int_{\Omega} (\nabla \times \varphi) \cdot (\nabla \times \psi) d\Omega \\
&= \int_{\Omega} \psi \cdot (\nabla \times (\nabla \times \varphi)) d\Omega + \int_{\Omega} \nabla \cdot (\psi \times (\nabla \times \varphi)) d\Omega \\
&= (\text{curl}\,\text{curl}\,\varphi, \psi) + \int_{\partial \Omega} [\psi \times (\nabla \times \varphi)] \cdot \mathbf{n} d\Gamma \\
&= (\text{curl}\,\text{curl}\,\varphi, \psi) + \int_{\partial \Omega} (\nabla \times \varphi) \cdot (\mathbf{n} \times \psi) d\Gamma \\
&= (\text{curl}\,\text{curl}\,\varphi, \psi) + \langle \nabla \times \varphi, \mathbf{n} \times \psi \rangle_{\Gamma} \\
&= (\text{curl}\,\text{curl}\,\varphi, \psi) + \langle \text{curl}\,\varphi, \mathbf{n} \times \psi \rangle_{\Gamma} \\
&= 0 + \langle \text{curl}\,\varphi \times \mathbf{n}, \psi \rangle_{\Gamma}\\
&= \langle\nabla (u- \tilde{u})\times \mathbf{n}, \psi \rangle_{\Gamma}\\
&= \langle\nabla_\Gamma(u- \tilde{u}), \psi \rangle_{\Gamma}\\
&= \langle\nabla_\Gamma(g- \tilde{u}|_\Gamma), \psi \rangle_{\Gamma}\\
&= -\langle g- \tilde{u}|_\Gamma, \nabla_\Gamma\psi \rangle_{\Gamma},
\end{align*}
namely $\varphi$ satisfies \eqref{13}. Now we obtain by \eqref{14} that $\nabla u_c=\nabla u$. Then $u_c-u$ is a constant which can be corrected by Step 4 and finally, we are lead to that $u^*=u$. The proof is completed. 
\end{proof}

\subsection{Essential boundary value problems in 4D and higher dimensions}

We use $\text{curl}$ and $\text{rot}$ for the curl operators for antisymmetric second-order tensor and vector functions, respectively. 

Namely, for an antisymmetric second-order tensor 
$$
\mathbf{\omega} = \begin{pmatrix}
0 & \omega_{12} & \omega_{13} & \omega_{14} \\
-\omega_{12} & 0 & \omega_{23} & \omega_{24} \\
-\omega_{13} & -\omega_{23} & 0 & \omega_{34} \\
-\omega_{14} & -\omega_{24} & -\omega_{34} & 0
\end{pmatrix},
$$
$ \text{curl}\ \mathbf{\omega}(x_1,x_2,x_3,x_4):= -\sum_{j=1}^d \partial_j \omega_{ji}, $ and for a vector function $\mathbf{v}=(v_1,v_2,v_3,v_4)^\top$, 
$$\text{rot}\ \mathbf{v} = \begin{pmatrix} 0 & \frac{\partial v_2}{\partial x_1} - \frac{\partial v_1}{\partial x_2} & \frac{\partial v_3}{\partial x_1} - \frac{\partial v_1}{\partial x_3} & \frac{\partial v_4}{\partial x_1} - \frac{\partial v_1}{\partial x_4} \\ -\frac{\partial v_2}{\partial x_1} + \frac{\partial v_1}{\partial x_2} & 0 & \frac{\partial v_3}{\partial x_2} - \frac{\partial v_2}{\partial x_3} & \frac{\partial v_4}{\partial x_2} - \frac{\partial v_2}{\partial x_4} \\ -\frac{\partial v_3}{\partial x_1} + \frac{\partial v_1}{\partial x_3} & -\frac{\partial v_3}{\partial x_2} + \frac{\partial v_2}{\partial x_3} & 0 & \frac{\partial v_4}{\partial x_3} - \frac{\partial v_3}{\partial x_4} \\ -\frac{\partial v_4}{\partial x_1} + \frac{\partial v_1}{\partial x_4} & -\frac{\partial v_4}{\partial x_2} + \frac{\partial v_2}{\partial x_4} & -\frac{\partial v_4}{\partial x_3} + \frac{\partial v_3}{\partial x_4} & 0 \end{pmatrix}, $$ 
$\text{div} \, \mathbf{v}=\nabla  \cdot (v_1,v_2,v_3,v_4) =\partial_1 v_1+\partial_2 v_2+\partial_3 v_3+\partial_4 v_4$.  Note that $\text{rot}\circ \nabla=0$ and $\text{div}\circ \text{curl}=0$.

Basically, we adopt the procedure below to transfer the original essential boundary value problem to several boundary value problems as below:

$\mathbf{Subproblem \ 1}$
\begin{align}\label{15}
\left\{
\begin{aligned}
-\Delta \tilde{u} &= f && \text{in } \Omega, \\
\frac{\partial \tilde{u}}{\partial \mathbf{n}} &= -\frac{1}{|\Gamma|} \int_{\Omega} f \,\mathrm{d}\Omega && \text{on } \partial \Omega.
\end{aligned}
\right.
\end{align}

$\mathbf{Subproblem \ 2}$
\begin{align}\label{16}
\left\{
\begin{aligned}
\text{rot}\ \text{curl}\varphi &= 0 && \text{in } \Omega, \\
\langle  \mathbf{n} \wedge  \text{curl} \varphi, \psi \rangle_{\Gamma}&= \langle  g - \tilde{u} , \text{curl} \psi \cdot \mathbf{n} \rangle_{\Gamma}&& \text{on } \partial \Omega.
\end{aligned}
\right.
\end{align}

$\mathbf{Subproblem \ 3}$
\begin{align}\label{17}
\left\{
\begin{aligned}
-\Delta u_c &= f && \text{in } \Omega, \\
\frac{\partial u_c}{\partial \mathbf{n}} &= \partial_{\mathbf{n}} \tilde{u} - \text{curl} \varphi \cdot \mathbf{n} && \text{on } \partial \Omega.
\end{aligned}
\right.
\end{align}

Then $u_c$ is equal to $u$ up to a constant which can be fixed by the boundary condition.

\begin{theorem}\label{thm:2.3}  
Let $u$ be the solution of \eqref{2}, and $u^*$ be obtained by the four steps below:

1. Find $\tilde{u} \in H_\Gamma^1(\Omega) := \{ \omega \in H^1(\Omega) : \int_\Gamma \omega=0 \}$, such that
\begin{equation}\label{18}
    (\nabla \tilde{u}, \nabla v) = \langle \tilde{f}, v \rangle_{(H_{\Gamma}^{1}(\Omega))' \times H^1_\Gamma(\Omega)}, \ \forall v \in H^1_\Gamma(\Omega),
\end{equation}
where $\tilde{f}$ is any extension of $f$ to $(H_{\Gamma}^{1}(\Omega))'$ such that $ \langle \tilde{f}, v \rangle =\langle f, v \rangle$ for $v \in H^1_0(\Omega)$.

2. Find $\varphi$ in the space of antisymmetric matrix fields on $\Omega$ with $H(\mathrm{curl},\Omega)$-regularity (equivalently, a 2-form with square-integrable $\mathrm{curl}$), such that
\begin{equation}\label{19}
(\text{curl}\varphi , \text{curl} \psi)=\langle  g - \tilde{u} , \text{curl} \psi \cdot \mathbf{n} \rangle_{\Gamma},\ \forall \psi \text{ in the same class.}
\end{equation}
Here, $\langle \cdot, \cdot \rangle_\Gamma$ denotes the duality pairing between $H^{1/2}(\Gamma)$ and $H^{-1/2}(\Gamma)$, which reduces to the standard $L^2(\Gamma)$ inner product for sufficiently smooth functions. The well-posedness of this pairing is guaranteed by the trace theorem for $H(\text{div}, \Omega)$ spaces: for any 2-form $\psi \in H(d, \Omega; \Lambda^2)$ (i.e., $\psi \in H(\text{curl}, \Omega)$ in our notation), its curl $\text{curl}\,\psi = \star d\psi$ is a divergence-free 1-form belonging to $H(\text{div}, \Omega; \Lambda^1)$, and thus its normal component trace $\text{curl}\,\psi \cdot n|_\Gamma$ is a well-defined element of $H^{-1/2}(\Gamma)$ \cite{16,17}.

3. Find a $u_c \in H^1(\Omega)$, such that
\begin{equation}\label{20}
(\nabla u_c, \nabla v)=(\nabla \tilde{u}-\text{curl}\varphi, \nabla v), \ \forall v \in H^1(\Omega).
\end{equation}

4. Set $u^* =u_c-C$, with $C=\frac{1}{|\gamma|}\int_\gamma(u_c-g)$ for any $\gamma \subset \Gamma$ such that $|\gamma|\ne 0$.

Then $u^* =u$.
\end{theorem}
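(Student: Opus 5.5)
The plan follows the same four-step reconstruction as the proof of \cref{thm:2.2}, with the 3D cross-product identities replaced by their 2-form counterparts. First, subtracting \eqref{18} from \eqref{2} gives $(\nabla(u-\tilde u),\nabla v)=0$ for all $v\in H^1_0(\Omega)$. So $w:=\nabla(u-\tilde u)$ is an $L^2$ vector field with $\operatorname{div} w=0$ in the distributional sense. On a simply connected (more precisely, cohomologically trivial in degree $d-1$, e.g.\ contractible or star-shaped) domain, the exactness of the de Rham complex at the level of $(d-1)$-forms, equivalently of the dual complex $\Lambda^2\xrightarrow{\delta}\Lambda^1\xrightarrow{\delta}\Lambda^0$, yields an antisymmetric tensor potential $\varphi_0$ with $H(\mathrm{curl})$-regularity such that $w=\operatorname{curl}\varphi_0=-\sum_j\partial_j(\varphi_0)_{ji}$. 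I would cite the regular potential results of FEEC \cite{16,17} here rather than reprove them. Moreover $w$ is a gradient, so $\operatorname{rot}\operatorname{curl}\varphi_0=\operatorname{rot}\nabla(u-\tilde u)=0$. This is the interior equation of \eqref{16}.

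Second, I will verify that $\varphi_0$ satisfies \eqref{19}. For any admissible $\psi$, I compute $(\operatorname{curl}\varphi_0,\operatorname{curl}\psi)=(\nabla(u-\tilde u),\operatorname{curl}\psi)$ and integrate by parts on the gradient instead of on the curl. This is the cleaner route in $d\ge4$, because it avoids any wedge-product boundary calculus:
\begin{equation*}
(\nabla(u-\tilde u),\operatorname{curl}\psi)=-(u-\tilde u,\operatorname{div}\operatorname{curl}\psi)+\langle u-\tilde u,\operatorname{curl}\psi\cdot\mathbf n\rangle_\Gamma=\langle g-\tilde u,\operatorname{curl}\psi\cdot\mathbf n\rangle_\Gamma,
\end{equation*}
using $\operatorname{div}\circ\operatorname{curl}=0$ (a consequence of antisymmetry, $\sum_{i,j}\partial_i\partial_j\psi_{ji}=0$) and $u|_\Gamma=g$. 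The pairing is justified by the $H(\mathrm{div})$ normal trace noted after \eqref{19}, with $u-\tilde u\in H^1(\Omega)$ so its trace lies in $H^{1/2}(\Gamma)$. For smooth fields the identity is just the divergence theorem applied to $(u-\tilde u)\operatorname{curl}\psi$, and density extends it.

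Third, \eqref{19} may have many solutions, since $\operatorname{curl}$ has a nontrivial kernel (the $\operatorname{rot}$-exact fields). I will show this is harmless. If $\varphi$ is any solution, then $\varphi-\varphi_0$ satisfies $(\operatorname{curl}(\varphi-\varphi_0),\operatorname{curl}\psi)=0$ for all $\psi$, and taking $\psi=\varphi-\varphi_0$ gives $\operatorname{curl}\varphi=\operatorname{curl}\varphi_0=\nabla(u-\tilde u)$. The same argument shows the right-hand side is compatible, because it is represented by $\varphi_0$. Hence \eqref{20} reads $(\nabla u_c,\nabla v)=(\nabla u,\nabla v)$ for all $v\in H^1(\Omega)$. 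Taking $v=u_c-u$ gives $\nabla u_c=\nabla u$, so $u_c-u$ is a constant $C'$ on the connected domain. Step~4 then gives $C=\frac{1}{|\gamma|}\int_\gamma(u+C'-g)=C'$, and therefore $u^*=u$.

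I expect the main obstacle to be the first step: producing an $H(\mathrm{curl})$ antisymmetric potential for a divergence-free $L^2$ field in general dimension. This requires the topological hypothesis on $\Omega$ (vanishing of the relevant de Rham cohomology, which "simply connected" alone does not guarantee when $d\ge4$; one can assume, say, a bounded Lipschitz contractible domain) together with the regular decomposition and Poincaré-operator theory from FEEC. I would invoke that theory as a black box. The remaining steps are routine integration by parts and coercivity arguments.
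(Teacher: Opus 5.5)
Your argument follows the paper's proof up to the last substitution: a potential for the divergence-free field $\nabla(u-\tilde u)$, then integration by parts onto the gradient using $\operatorname{div}\circ\operatorname{curl}=0$ and $u|_\Gamma=g$. Two of your additions are real improvements. Your uniqueness step for $\operatorname{curl}\varphi$ fills a gap, since the paper silently identifies the Step~2 solution with the potential. Your topological caveat is also correct: what is needed is vanishing of $H^{d-1}$, not simple connectivity.

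The proof breaks at ``Hence \eqref{20} reads $(\nabla u_c,\nabla v)=(\nabla u,\nabla v)$.'' Your own third step shows that \eqref{19} forces $\operatorname{curl}\varphi=\nabla(u-\tilde u)$. So the right-hand side of \eqref{20} is $\nabla\tilde u-\nabla(u-\tilde u)=2\nabla\tilde u-\nabla u$, which gives $u_c=2\tilde u-u+\mathrm{const}$. Step~4 then returns $u^*=2\tilde u-u-\frac{2}{|\gamma|}\int_\gamma(\tilde u-g)$, and this equals $u$ only when $\tilde u|_\Gamma-g$ is constant. Concretely, take $f=0$ with $\tilde f=0$. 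Then $\tilde u=0$ and $u^*=-u+\frac{2}{|\gamma|}\int_\gamma g$. With $u=x_1$ on a ball centred at the origin and $\gamma=\Gamma$, this gives $u^*=-x_1\neq u$. This is not a matter of convention. \eqref{19} characterizes $\operatorname{curl}\varphi$ by its pairing with the range of $\operatorname{curl}$, so flipping the sign convention for $\operatorname{curl}$ changes nothing.

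The paper's own proof makes exactly the same slip (``Now we obtain by \eqref{20} that $\nabla u_c=\nabla u$''). So the theorem as printed is false, and the relative sign between \eqref{19} and \eqref{20} must be corrected in one of two ways:
\begin{itemize}
\item use $(\nabla\tilde u+\operatorname{curl}\varphi,\nabla v)$ in \eqref{20}, and correspondingly $\partial_{\mathbf n}\tilde u+\operatorname{curl}\varphi\cdot\mathbf n$ in \eqref{17}; or
\item replace the right-hand side of \eqref{19} by $\langle\tilde u-g,\operatorname{curl}\psi\cdot\mathbf n\rangle_\Gamma$, which yields $\operatorname{curl}\varphi=\nabla(\tilde u-u)$.
\end{itemize}
With either correction your argument goes through verbatim. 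Your uniqueness step is then exactly what makes the conclusion independent of which solution of \eqref{19} is chosen. Once that step had pinned down $\operatorname{curl}\varphi$, writing out the substitution into \eqref{20} explicitly would have exposed the mismatch.
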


\begin{proof}
By \eqref{2} and \eqref{18}, $(\nabla u-\nabla \tilde{u}, \nabla v)=0$ for any $v \in H^1_0(\Omega)$ and it follows that $\nabla (u- \tilde{u}) = \text{curl} \varphi$ for some $ \varphi \in H^1(\Omega)$. Further applying the operator $\operatorname{rot}$ to both sides gives $\text{rot}(\nabla (u- \tilde{u}))=\text{rot}\ \text{curl}\varphi=0$. Now take any test tensor $\psi \in H(\operatorname{curl},\Omega)$. Taking the $L^2$ inner product of $\nabla (u- \tilde{u}) = \text{curl} \varphi$ with $\operatorname{curl} \psi$ and applying the divergence theorem, we obtain
\begin{align*}
(\operatorname{curl} \varphi, \operatorname{curl} \psi)
&= \int_\Omega \nabla (u - \tilde{u}) \cdot \operatorname{curl} \psi \, dx \\
&= \int_\Gamma (u - \tilde{u}) (\operatorname{curl} \psi \cdot \mathbf{n}) \, d\Gamma 
   - \int_\Omega (u - \tilde{u}) \operatorname{div}(\operatorname{curl} \psi) \, dx.
\end{align*}
Since $\operatorname{div} \circ \operatorname{curl} = 0$, the volume integral vanishes and
\begin{align*}
(\operatorname{curl} \varphi, \operatorname{curl} \psi)
&= \int_\Gamma (u - \tilde{u}) (\operatorname{curl} \psi \cdot \mathbf{n}) \, d\Gamma
= \langle  g - \tilde{u} , \text{curl} \psi \cdot \mathbf{n} \rangle_{\Gamma}.
\end{align*}
The above deduction shows that $\varphi$ satisfies \eqref{19}. Now we obtain by \eqref{20} that $\nabla u_c=\nabla u$. Then $u_c-u$ is a constant which can be corrected by Step 4 and finally, we are lead to that $u^*=u$. The proof is completed. 
\end{proof}

The four-dimensional construction described above is a special case of a general framework that applies in any dimension \(d \ge 4\). To see this, it is natural to adopt the language of differential forms. On a smooth manifold, \(\Omega^k\) denotes the space of all smooth \(k\)-forms and it is more precisely the space of smooth sections of the \(k\)-th exterior power of the cotangent bundle. On \(\mathbb{R}^d\) endowed with the Euclidean metric, let \(\mathrm{d}: \Omega^k \to \Omega^{k+1}\) denote the exterior derivative and \(\delta: \Omega^k \to \Omega^{k-1}\) the codifferential (the formal adjoint of \(\mathrm{d}\)). These two operators, together with the identities \(\mathrm{d}^2 = 0\) and \(\delta^2 = 0\), form the foundation of de Rham cohomology and Hodge theory. They are precisely the mathematical reason why the framework in the paper can be seamlessly extended from 2D/3D/4D to any higher dimension.

The operators \(\operatorname{curl}\), \(\operatorname{rot}\) and \(\operatorname{div}\) can be naturally extended from the 4D case to higher-dimensional cases. For a vector \(v = (v_1,\dots,v_d)^\top\),
\[
(\operatorname{rot} v)_{ij} = \partial_i v_j - \partial_j v_i, \qquad 1 \le i < j \le d,
\]
which is an antisymmetric second-order tensor (a \(2\)-form). For a \(2\)-form, the curl yields a vector field (a \(1\)-form). The divergence of a vector field is \(\operatorname{div} v = \sum_{i=1}^d \partial_i v_i\). The identities \(\mathrm{d}^2 = 0\) and \(\delta^2 = 0\) immediately imply \(\operatorname{rot} \circ \nabla = 0\) and \(\operatorname{div} \circ \operatorname{curl} = 0\) in every dimension. Because of this geometric structure, the decomposition \(\nabla (u - \tilde{u}) = \operatorname{curl} \varphi\) and the variational arguments used in the proof of \cref{thm:2.3} can be extended to any \(d \ge 4\).

\section{High-dimensional natural deep Ritz method}
\label{sec:3}

In this section, we translate the variational formulations derived in \cref{sec:2} for various dimensions into a set of loss functions that can be directly optimized by neural networks. This establishes the Natural Deep Ritz Method (NatDRM) applicable to any dimension \(d \ge 2\).

As described in \cref{sec:2}, the solution of the essential boundary value problem \eqref{1} is obtained by solving three subproblems with pure Neumann (natural) boundary conditions. Each subproblem is elliptic and can be efficiently handled by the deep Ritz method without any boundary penalty terms. \Cref{tab:lossmap} maps each subproblem to its energy and discrete loss.

\begin{table}[htbp]
\centering
\caption{Subproblems, weak forms, and NatDRM losses.\label{tab:lossmap}}
\small
\begin{tabular}{|c|c|c|c|}
\hline
Step & Continuous & Energy idea & Loss \\
\hline
1 & \eqref{6}, \eqref{12}, \eqref{18} & Neumann Poisson for \(\tilde u\) & \eqref{21} \\
2 & \eqref{7}, \eqref{13}, \eqref{19} & \(\|\mathcal{C}\varphi\|^2\) + boundary & \eqref{22} \\
3 & \eqref{8}, \eqref{14}, \eqref{20} & gradient matching & \eqref{23} \\
\hline
\end{tabular}
\end{table}

Let \(\varphi_{\rm NN}(d,1)\) denote scalar-valued networks with \(d\)-dimensional input; \(\varphi_{\rm NN}(d,k)\) denotes \(k\)-component outputs (vector or \(\binom{d}{2}\) tensor components).

$\mathbf{Subproblem 1: Computation \ of \ \tilde{u}.}$

For the subproblem 1 the corresponding energy functional is
\[
\mathcal{E}_1(\tilde{u}) = \frac12 \int_\Omega |\nabla\tilde{u}|^2 \,{\rm d}x - \int_\Omega f\,\tilde{u} \,{\rm d}x.
\]
We introduce
\[
c_1 = \frac{1}{|\Gamma|}\int_\Gamma \tilde{u}(x)\,{\rm d}\Gamma,
\]
and define the loss function
\begin{equation}\label{21}
\mathcal{L}_1(\tilde{u}) = \sum_{(x_j,\omega_j)\in D} \Big[\frac12 |\nabla\tilde{u}(x_j)|^2 \omega_j - f(x_j)\big(\tilde{u}(x_j)-c_1\big)\omega_j\Big] + c_1^2, 
\end{equation}
where \(D\) and \(D_\Gamma\) are the sets of quadrature points and weights for the domain \(\Omega\) and its boundary \(\Gamma\), respectively. The term \(c_1^2\) is added as a regularization to make the solution unique.

$\mathbf{Subproblem 2: Computation \ of \ the \ potential \ \varphi.}$ 

The way boundary data enter the subproblem 2 depends on the spatial dimension. In all cases the bilinear form is \((\mathcal{C}\varphi,\mathcal{C}\psi)\) with a suitable curl operator \(\mathcal{C}\), and the right-hand side is a boundary integral determined by the mismatch \(g-\tilde{u}|_\Gamma\).\\
$\mathbf{Case \ d = 2.}$ \(\varphi\) is a scalar function. The energy functional corresponding to the weak form \eqref{7} is
\[
\mathcal{E}_2^{\,d=2}(\varphi) = \frac12\int_\Omega |\operatorname{curl}\varphi|^2\,{\rm d}x - \int_\Gamma \partial_t(g-\tilde{u})\,\varphi\,{\rm d}\Gamma.
\]
$\mathbf{Case \ d = 3.}$ \(\varphi\) is a vector field. The energy functional corresponding to the weak form \eqref{13} is
\[
\mathcal{E}_2^{\,d=3}(\varphi) = \frac12\int_\Omega |\operatorname{curl}\varphi|^2\,{\rm d}x - \int_\Gamma \nabla_\Gamma(g-\tilde{u})\cdot\varphi\,{\rm d}\Gamma.
\]
$\mathbf{Case \ d \ge 4.}$ \(\varphi\) is an antisymmetric second-order tensor (2-form). The energy functional corresponding to the weak form \eqref{19} is
\[
\mathcal{E}_2^{\,d\ge4}(\varphi) = \frac12\int_\Omega |\operatorname{curl}\varphi|^2\,{\rm d}x - \int_\Gamma (g-\tilde{u})\,(\operatorname{curl}\varphi\cdot\mathbf{n})\,{\rm d}\Gamma.
\]
The solution of each variational problem is unique only up to a kernel. This kernel can be fixed by adding a regularization term that penalizes a suitable mean of \(\varphi\) on the boundary, which appears in the loss function as \(\overline{\varphi}^{\,2}\). For 2D we use \((\int_\Gamma \varphi)^2\); for 3D and higher dimensions we penalize the square of the mean of the normal component, e.g., \((\int_\Gamma \varphi\cdot\mathbf{n})^2\).

Hence we define the second loss function in a unified manner as
\begin{equation}\label{22}
\mathcal{L}_2(\varphi) = \sum_{(x_j,\omega_j)\in D} \frac12 |\mathcal{C}\varphi(x_j)|^2\omega_j \;+\; \mathcal{B}_2(\varphi;\tilde{u},g) \;+\; \overline{\varphi}^{\,2}, 
\end{equation}
where \(\mathcal{C}\varphi = \operatorname{curl}\varphi\) for \(d\ge2\), and the boundary term \(\mathcal{B}_2\) is the discretized form of the corresponding boundary integral, namely
\[
\mathcal{B}_2^{\,2D} = -\sum_{(x_j,\omega_j)\in D_\Gamma} \partial_t(g-\tilde{u})(x_j)\,\varphi(x_j)\,\omega_j,
\]
\[
\mathcal{B}_2^{\,3D} = -\sum_{(x_j,\omega_j)\in D_\Gamma} \nabla_\Gamma(g-\tilde{u})(x_j)\cdot\varphi(x_j)\,\omega_j,
\]
\[
\mathcal{B}_2^{\,d\ge4} = -\sum_{(x_j,\omega_j)\in D_\Gamma} (g(x_j)-\tilde{u}(x_j))\,\bigl(\operatorname{curl}\varphi(x_j)\cdot\mathbf{n}(x_j)\bigr)\,\omega_j.
\]
For 2D the squared mean \(\overline{\varphi}^{\,2}\) can be taken as \((\int_\Gamma \varphi)^2\); for 3D and higher dimensions we penalize the square of the mean of the normal component, e.g., \((\int_\Gamma \varphi\cdot\mathbf{n})^2\).

$\mathbf{Subproblem \ 3: Recovery \ of \ u_c.}$ 

The subproblem~3 reconstructs the solution up to a constant. With \(\mathcal{C}\varphi=\operatorname{curl}\varphi\), 
\[
(\nabla u_c,\nabla v) = (\nabla\tilde{u} -\,\mathcal{C}\varphi,\nabla v).
\]
The corresponding least-squares loss is
\begin{equation}\label{23}
\mathcal{L}_3(u_c) = \sum_{(x_j,\omega_j)\in D} \bigl|\nabla u_c(x_j) - \nabla\tilde{u}(x_j) +\,\mathcal{C}\varphi(x_j)\bigr|^2\omega_j \;+\; \Bigl(\sum_{(x_j,\omega_j)\in D_\Gamma} (u_c(x_j)-g(x_j))\omega_j\Bigr)^2 .
\end{equation}
The second term fixes the additive constant by matching boundary means. For \(d\ge3\), \(\mathcal{C}\varphi\) is vector-valued and matching is component-wise.

\begin{algorithm}[htbp]
\caption{NatDRM training (joint mode)\label{alg:natdrm}}
\begin{algorithmic}
\STATE Sample quadrature sets \(D\subset\Omega\), \(D_\Gamma\subset\Gamma\); choose networks \(\tilde u_\theta\), \(\varphi_\eta\), \(u_{c,\zeta}\).
\STATE \textbf{repeat} until convergence
\STATE \quad Minimize \(\mathcal{L}_1(\tilde u_\theta)\) using \eqref{21} (gauge \(c_1\)).
\STATE \quad Minimize \(\mathcal{L}_2(\varphi_\eta)\) using \eqref{22} with \(\tilde u_\theta|_\Gamma\) and \(g\).
\STATE \quad Minimize \(\mathcal{L}_3(u_{c,\zeta})\) using \eqref{23} with \(\tilde u_\theta\), \(\varphi_\eta\).
\STATE \textbf{end repeat}
\STATE Return \(u \approx u_{c,\zeta} - C\) with \(C\) from Step~4 in \cref{sec:2}.
\end{algorithmic}
\end{algorithm}

In practice we minimize \(\mathcal{L}=\mathcal{L}_1+\mathcal{L}_2+\mathcal{L}_3\) simultaneously (\cref{alg:natdrm}, joint mode). An alternative is to alternate the optimization of the three subproblems, where each subproblem’s network is updated while the other two networks are frozen. This is achieved by gradient blocking: for the loss \(\mathcal{L}_1\), only the parameters of \(\tilde{u}_\theta\) receive gradients; for \(\mathcal{L}_2\), the parameters of \(\varphi_\eta\) receive gradients while the outputs of \(\tilde{u}_\theta\) and \(u_{c,\zeta}\) are treated as constants; similarly, for \(\mathcal{L}_3\), only the parameters of \(u_{c,\zeta}\) are updated, and the contributions from \(\tilde{u}_\theta\) and \(\varphi_\eta\) are detached. This alternating scheme completely decouples the gradient flows among the three subnetworks, preventing interference and often leading to more stable convergence. In practice, one can perform one Adam step for each subproblem in a round‑robin fashion within each epoch.

In our experiments, the alternating optimization scheme yields larger relative \(L^2\) errors than joint training, yet requires less runtime per epoch. This behavior can be explained as follows. Joint optimization simultaneously updates all three subnetworks by minimizing the total loss \(\mathcal{L}=\mathcal{L}_1+\mathcal{L}_2+\mathcal{L}_3\); the gradients are computed from a single consistent objective, which allows the three networks to coordinate and better approach the global optimum. In contrast, alternating optimization solves each subproblem with the other two networks held fixed, creating a “lagged” coupling: the boundary term \(v = \nabla u - \nabla u_1\) used when updating \(\varphi\) becomes outdated after \(u\) or \(u_1\) changes. This inconsistency traps the iterates in suboptimal local minima, hence the higher error. The shorter runtime, however, arises because each backward pass in alternating mode involves only one subnetwork’s computation graph, which is smaller and less memory‑intensive; the joint backward pass must accommodate all three graphs simultaneously, incurring extra synchronization and memory overhead. A quantitative comparison of the two strategies, including relative \(L^2\) errors and time across multiple runs, is presented later in \Cref{sec:4} (see \Cref{tab:joint_vs_alt}).

\subsection{Extensions to variable coefficients and semilinear terms }
\label{sec:3-extensions}

For the variable-coefficient problem \eqref{26}, subproblem~1 becomes \(-\mathrm{div}(\mathcal{A}^2\nabla\tilde u)=f\) with the same Neumann compatibility on \(\Gamma\). The Ritz loss replaces \eqref{21} by
\[
\mathcal{L}_1^{\mathrm{vc}}(\tilde u) = \sum_{(x_j,\omega_j)\in D}\Big[\tfrac12\,\mathcal{A}(x_j)\nabla\tilde u(x_j)\cdot \mathcal{A}(x_j)\nabla\tilde u(x_j)\,\omega_j - f(x_j)\big(\tilde u(x_j)-c_1\big)\omega_j\Big]+c_1^2,
\]
while $L_2$, $L_3$ retain the weighted Poisson-type gradient decomposition on $g-\tilde{u}$. This formulation is a natural high-dimensional extension of the 2D variable-coefficient result rigorously proven in \cite{15}, where the decomposition is justified by the weighted de Rham complex theory. In our numerical experiments, we use a diagonal positive-definite coefficient matrix $\mathcal{A}(x)$, which satisfies the conditions for the weighted Hodge decomposition. The mathematical justification for general non-diagonal variable coefficients remains an open problem for future investigation.

For general semilinear elliptic problems of the form $-\Delta u = f(x, u)$ with Dirichlet boundary conditions, we employ a Picard fixed-point iteration as the outer loop to handle the nonlinearity. At the $k$-th outer iteration:

1.  Evaluate the nonlinear right-hand side $f^{(k)}(x) = f\left(x, u^{(k)}(x)\right)$ using the current approximate solution $u^{(k)} \approx u_{c}^{(k)} - C^{(k)}$ obtained from the previous iteration.

2.  Update the intermediate functions $\tilde{u}$, $\varphi$, and $u_c$ by minimizing the joint loss $\mathcal{L} = \mathcal{L}_1 + \mathcal{L}_2 + \mathcal{L}_3$, where $\mathcal{L}_1$ uses the updated $f^{(k)}(x)$ as the right-hand side, while $\mathcal{L}_2$ and $\mathcal{L}_3$ remain unchanged as they only depend on the boundary mismatch $g - \tilde{u}|_{\Gamma}$ and the gradient matching condition.

The Picard iteration is chosen for its computational efficiency and numerical robustness: it avoids the expensive Hessian computation required by Newton-type methods, which is particularly prohibitive in high-dimensional neural network settings. It also exhibits better stability with the small-weight initialization of neural networks. For strongly nonlinear problems, a relaxation parameter $\alpha \in (0, 1]$ can be introduced to update the solution as $u^{(k+1)} = \alpha u^{(k+1)}_{\text{new}} + (1-\alpha) u^{(k)}$ to ensure convergence.

In numerical experiments, we adopted the following convergence criteria:

Inner training convergence: For each outer iteration, the joint loss $\mathcal{L}$ is minimized until its relative reduction per epoch is less than $\epsilon_{\text{inner}} = 10^{-6}$, or a maximum of 300 Adam epochs is reached.

Outer iteration convergence: The outer loop terminates when the relative $L^2$ error between consecutive iterates satisfies
  $$\frac{\|u^{(k+1)} - u^{(k)}\|_{L^2(\Omega)}}{\|u^{(k)}\|_{L^2(\Omega)}} < \epsilon_{\text{outer}} = 10^{-4},$$
or a maximum of 15 outer iterations is performed to prevent excessive computation.

\subsection{Training implementation}

Losses \eqref{21}-\eqref{23} are evaluated by automatic differentiation (AD) in PyTorch 2.2 \cite{20}. For \(d \leq 3\), \(\tilde{u}\), \(\varphi\), and \(u_c\) are scalar- or vector-valued fully connected residual networks; for \(d\ge4\), \(\varphi\) outputs \(\binom{d}{2}\) independent antisymmetric components. All derivatives (gradient, curl, surface gradient) are obtained via AD. Numerical results are reported in \cref{sec:4}.

In numerical experiments, we need to further consider Gauge-fixing regularization. The curl operator has a non-trivial kernel due to the Poincaré lemma on simply connected domains:

In 2D, \(\text{curl}\,\varphi = 0\) if and only if \(\varphi\) is a constant;

In 3D, \(\text{curl}\,\varphi = 0\) if and only if \(\varphi = \nabla q\) for some scalar function \(q\);

In \(d \geq 4\), \(\text{curl}\,\varphi = 0\) if and only if \(\varphi = \text{rot}\,\psi\) for some vector field \(\psi\) (1-form).

Consequently, the variational formulation of Subproblem 2 admits solutions that are unique only up to an element of this kernel. Crucially, since the final solution depends only on \(\text{curl}\,\varphi\) (not \(\varphi\) itself) in the gradient matching step (Subproblem 3), this non-uniqueness does not affect the correctness of the method. However, appropriate regularization is required to prevent the norm of \(\varphi\) from growing unbounded during training and to ensure numerical stability.

To this end, we add simple boundary-based gauge-fixing terms to the loss function \eqref{22}:

For \(d=2\), we penalize the squared boundary mean \(\left(\int_{\Gamma} \varphi \, d\Gamma\right)^2\), which uniquely fixes the constant kernel and guarantees a unique solution;

For \(d \geq 3\), we penalize the squared mean of the normal component on the boundary \(\left(\int_{\Gamma} \varphi \cdot n \, d\Gamma\right)^2\).

Mathematically, the single scalar constraint used for \(d \geq 3\) is insufficient to eliminate the entire infinite-dimensional kernel. Nevertheless, numerical experiments demonstrate that this regularization, combined with implicit regularization effects inherent in deep learning (including small-weight initialization, adaptive optimization dynamics, and limited network expressivity), is sufficient to stabilize training. Additionally, all experiments use a small weight decay (\(\text{weight\_decay}=10^{-5}\)) in the Adam optimizer, which further penalizes large values of \(\varphi\) and prevents drift in the kernel directions.

For applications requiring mathematically rigorous uniqueness, a Hodge gauge-fixing strategy based on the codifferential operator \(\delta\) can be employed. This involves adding the term \(\alpha (\delta \varphi, \delta \varphi)\) to the energy functional, where \(\alpha > 0\) is a fixed regularization parameter (typically \(\alpha=1\)). This modified bilinear form is coercive by the Hodge decomposition theorem, ensuring a unique solution while preserving the value of \(\text{curl}\,\varphi\) required for Subproblem 3.




\section{Numerical experiments}
\label{sec:4}

In this section, we apply the proposed Natural Deep Ritz Method to solve Poisson equations in three, four, and six dimensions, and conduct comprehensive comparisons with the standard DRM and PINN.

\subsection{Experiment setup}

All numerical experiments are implemented using PyTorch 2.2 \cite{20}. In the 3D experiment, the network architecture employs a ResNet with 5 residual blocks (ResBlock); whereas for problems of dimension greater than three, the number of residual blocks is reduced to 2. This reduction is adopted to enhance computational efficiency and stabilize optimization performance when tackling high‑dimensional problems. Each ResBlock consists of two activation layers and two linear transformations. To ensure a fair comparison of parameter counts across methods, the hidden widths of NatDRM, DRM, and PINN are set to 20, 35, and 35, respectively. Considering that NatDRM simultaneously optimizes the loss functions of three subproblems and needs to represent a tensor-valued potential \(\varphi\) (which requires outputting \(\binom{d}{2}\) independent components for \(d\geq 4\)), its actual number of parameters remains on the same order as the other two methods.

The tested activation functions include: the rectified quadratic unit with Lipschitz regularization (ReQUr), the rectified cubic unit with Lipschitz regularization (ReCUr), and the hyperbolic tangent (Tanh). They are defined respectively as:
\begin{align*} &\operatorname{ReQUr}(x) = \operatorname{ReLU}^2(x) - \operatorname{ReLU}^2(x - 0.5), \\ &\operatorname{ReCUr}(x) = \operatorname{ReLU}^3(x) - 2\operatorname{ReLU}^3(x - 0.5) + \operatorname{ReLU}^3(x - 1). \end{align*} 
Additional details concerning ResNet and rectified power activations appear in \cite{he2016deep,li2020better,yu2021onsagernet}. Training data are generated using 5th-order composite Gaussian quadrature. For three- and four-dimensional problems, 20,000 quadrature points with corresponding weights are sampled both in the interior and on the boundary. An additional 10,000 uniformly distributed test points are generated to evaluate generalization error. For six-dimensional problems, 40,000 quadrature points are used for interior and boundary training, and 20,000 uniform test points are generated. Both the Adam and L-BFGS optimizers are employed. For Adam, we train for 100 epochs with batch sizes of 200, 300, and 500 for 3D, 4D, and 6D problems respectively. We set the initial learning rate to 0.005 and employ a learning rate scheduler, CosineAnnealingLR \cite{21}. For L-BFGS, we perform 50 steps with a history size of 100 and 60 inner iterations per step. All derivatives (gradient, curl, tangential gradient, and Laplacian) are evaluated via automatic differentiation.

\subsection{Benchmark examples}

To systematically verify the performance of NatDRM under different dimensions and smoothness conditions, we conduct numerical experiments on the following four benchmark examples.

Example 1:
\begin{align}\label{24}
\left\{
\begin{aligned} -\Delta u &= d\sin(\sum_{i=1}^{d}x_i) - 2d, && \text{in } \Omega = [-1,1]^d, \\ u &= \sum_{i=1}^{d}x_i^2 + \sin(\sum_{i=1}^{d}x_i), && \text{on } \partial\Omega . \end{aligned}
\right.
\end{align}
This example is globally smooth, making it an ideal benchmark for testing the convergence and accuracy of numerical methods.

Example 2:
\begin{align}\label{25}
\left\{
\begin{aligned} -\Delta u &= \dfrac{\pi^2}{4}\sum_{i=1}^{d}\sin\!\Big(\dfrac{\pi}{2}x_i\Big), && \text{in } \Omega = [0,1]^d, \\ u &= \sum_{i=1}^{d}\sin\!\Big(\dfrac{\pi}{2}x_i\Big), && \text{on } \partial\Omega . \end{aligned}
\right.
\end{align}
The solution of this example is a superposition of multiple one-dimensional sine functions, with a typical high-dimensional separable structure, which is commonly used to test the performance of high-dimensional PDE solvers.

Example 3:

We further consider a second order problem of divergence form, namely an elliptic equation with smooth variable coefficients and a Dirichlet boundary condition:
\begin{align}\label{26}
\left\{
\begin{aligned}
-\text{div}(\mathcal{A}^2 \nabla u) -f&= 0, && \text{in } [-1,1]^d, \\
u -g&= 0, && \text{on } \partial\Omega.
\end{aligned}
\right.
\end{align}
The variable coefficient matrix is taken as $ \mathcal{A}(x)=\mathrm{diag}\big(1+x_1^2,\;1,\;1,\dots,1\big) \in\mathbb{R}^{d\times d} $, and the exact solution is given as $u(x)=e^{\cos\big(x_1+x_2^2\big)} $.

Example 4:

To further test a nonlinear equation of greater complexity, the following semilinear Poisson equation with Dirichlet boundary condition is considered as a test case.
\begin{align}\label{27}
\left\{
\begin{aligned} -\Delta u &= 2du + 4u\ln{u}, && \text{in } \Omega = [-1,1]^d, \\ u &= e^{-\sum_{i=1}^{d}x_i^2}, && \text{on } \partial\Omega . \end{aligned}
\right.
\end{align}

\subsection{Experimental results}

\subsubsection{Efficiency and Accuracy Comparison}

Table \ref{tab:1} summarizes the CPU time (in seconds) and relative \(L^2\) errors of DRM, PINN, and NatDRM across three dimensions. For DRM and PINN, multiple penalty parameters \(\beta \in \{10, 100, 1000\}\) are tested, whereas NatDRM requires only a single run without any parameter tuning.

\begin{table}[htbp]
\centering
\caption{CPU time (s) and relative \(L^2\) errors of different methods on 3D, 4D, and 6D examples.}
\label{tab:1}
\small
\begin{tabular}{|c|c|c|c|c|c|c|c|c|c|}
\hline
Dim & Ex & Met & \multicolumn{3}{c|}{DRM} & \multicolumn{3}{c|}{PINN} & NatDRM \\
\cline{4-9}
    &         &        & $\beta=10$ & 100 & 1000 & $\beta=10$ & 100 & 1000 & \\
\hline
3D  & 1 & time  & 198.2 & 196.1 & 196.9 & 401.8 & 402.9 & 401.3 & 762.3 \\
    &   & error & 7.2(-2) & 7.8(-3) & 1.2(-2) & 1.1(-2) & 7.5(-3) & 8.2(-3) & 4.5(-3) \\
    & 2 & time  & 195.7 & 194.7 & 192.9 & 452.3 & 451.9 & 451.8 & 773.2 \\
    &   & error & 4.6(-3) & 2.9(-3) & 9.8(-3) & 3.5(-3) & 2.8(-3) & 2.6(-3) & 1.5(-3) \\
4D  & 1 & time  & 106.2 & 112.1 & 101.9 & 309.6 & 321.8 & 308.9 & 465.6 \\
    &   & error & 4.6(-1) & 8.4(-2) & 5.2(-3) & 1.5(-1) & 1.0(-2) & 2.9(-3) & 3.1(-3) \\
    & 2 & time  & 103.7 & 115.7 & 102.9 & 308.2 & 309.3 & 307.9 & 576.1 \\
    &   & error & 1.8(-1) & 2.0(-2) & 2.2(-3) & 1.5(-2) & 8.4(-3) & 2.9(-3) & 3.0(-3) \\
6D  & 1 & time  & - & - & 1184.3 & 182.5 & 195.8 & 222.7 & 341.7 \\
    &   & error & $> 1$ & $> 1$ & 1.4(-2) & 3.0(-2) & 4.3(-3) & 2.1(-3) & 3.2(-2) \\
    & 2 & time  & - & 838.2 & 985.8 & 293.9 & 331.6 & 315.8 & 368.0 \\
    &   & error & $> 1$ & 3.4(-2) & 2.0(-3) & 1.9(-2) & 1.3(-3) & 1.5(-3) & 7.7(-3) \\
\hline
\end{tabular}
\end{table}

From \Cref{tab:1}, the following key observations can be made:

(1) $\mathbf{Accuracy \ comparison.}$ In 3D and 4D, NatDRM achieves accuracy comparable to or even better than the optimally tuned DRM or PINN without any parameter tuning. For both examples in these dimensions, the errors of the three methods under their respective optimal parameters lie in the same order of magnitude, exhibiting satisfactory solution accuracy. In the 6D examples, while the best-tuned PINN yields slightly smaller errors, NatDRM remains stable and completely tuning-free, and significantly outperforms DRM. For most $\beta$ values, DRM fails to produce a reasonable numerical solution, and its relative $L^2$ error exceeds 1, which is marked as “$>1$” in the table. Correspondingly, the “-” retained in the time column indicates that the runtime is meaningless for these cases, as DRM cannot converge to a valid solution within a practical computational time frame under these parameter settings.

(2) $\mathbf{Sensitivity \ to \ the \ penalty \ parameter.}$ The accuracy of DRM and PINN strongly depends on the choice of \(\beta\), and the optimal \(\beta\) varies across examples. Taking 3D as an example, the DRM error on \eqref{25}-3D drops from \(4.6\times 10^{-3}\) at \(\beta=10\) to \(2.9\times 10^{-3}\) at \(\beta=100\), then rises to \(9.8\times 10^{-3}\) at \(\beta=1000\). This non‑monotonic behavior indicates that an excessively large \(\beta\) introduces severe numerical stiffness, while an overly small \(\beta\) provides insufficient boundary constraint. Although PINN is generally less sensitive to \(\beta\), its optimal value still differs among cases. In practice, identifying the optimal \(\beta\) requires multiple trial runs for each new problem, substantially increasing the total computational cost.

(3) $\mathbf{Computational \ time \ analysis.}$ In terms of single‑run time, NatDRM generally takes longer than a single run of DRM or PINN. For example, in 4D, NatDRM takes about 466 s (Example 1) and 576 s (Example 2), while DRM takes about 102–112 s per run. The extra cost arises because NatDRM (i) requires forward and backward passes for three loss functions, and (ii) must represent an antisymmetric tensor potential \(\varphi\) in high dimensions (6 independent components in 4D and 15 in 6D), increasing the network output dimension. Crucially, however, DRM and PINN usually require 3–5 (or more) trial runs to determine a suitable \(\beta\), making their total time potentially far exceed a single run of NatDRM. In high dimensions, the time disadvantage of NatDRM becomes relatively smaller as the model size grows, and explicit parallelization of the subproblems can further improve its efficiency.

(4) $\mathbf{Degradation \ in \ high \ dimensions.}$ As the dimension increases from 3D to 6D, the accuracy of all three methods degrades to some extent. For the 6D Example 1 with \(\beta=10\) and \(\beta=100\), DRM even fails to attain meaningful accuracy, indicating that the penalty approach severely breaks down in very high dimensions due to sparse boundary sampling and the stiffness of the penalty term. PINN remains trainable in 6D, but the tuning cost increases significantly. In this regime, the advantage of NatDRM becomes even more pronounced, as it exhibits consistently stable convergence without the risk of tuning failure.

To further investigate the practical trade‑off between joint and alternating training, we compare the two strategies on the 4D Poisson benchmark \eqref{24} under identical settings (network architecture, quadrature points, optimizers, and number of epochs). The alternating scheme follows the gradient‑blocking procedure, where each subproblem is optimized separately while the other two networks are frozen. Five independent runs with different random seeds are performed, and the numerical ranges of the relative \(L^2\) error and the training loss are reported in \Cref{tab:joint_vs_alt}.

\begin{table}[h]
\centering
\caption{Comparison of joint vs. alternating optimization on the 4D Poisson problem \eqref{24}. Results are averaged over five runs with different random seeds.}
\label{tab:joint_vs_alt}
\begin{tabular}{|c|c|c|c|}
\hline
Strategy & Rerror (inner) & error (boundary) & time \\
\hline
Joint & 5.3(±0.5)e-3 & 3.1(±0.1)e-3 & 465(±41) \\
Alternating & 6.8(±0.7)e-2 & 7.6(±0.2)e-2 & 367(±24) \\
\hline
\end{tabular}
\end{table}

As shown in \Cref{tab:joint_vs_alt}, joint training achieves lower relative \(L^2\) errors and a narrower spread, indicating better convergence to the exact solution. Alternating training, while faster per epoch, suffers from larger errors due to the “lagged” coupling among subproblems. This observation confirms that, for the strongly coupled NatDRM loss, the simultaneous update of all three networks is essential for accuracy, whereas the alternating mode is more suitable when computational speed is prioritized over precision.

\subsubsection{Training Behavior and Influence of Activation Functions}

\Cref{fig1}-\Cref{fig3} show the relative \(L^2\) interior and boundary errors versus training epochs for different activation functions. The blue line represents the ReCUr activation function, the orange line represents the ReQUr activation function, and the green line represents the Tanh activation function.

\begin{figure}[tbhp]
\centering
\subfloat[NatDRM: inner error]{\label{fig:ex3NDBerrori}\includegraphics[width=0.49\textwidth]{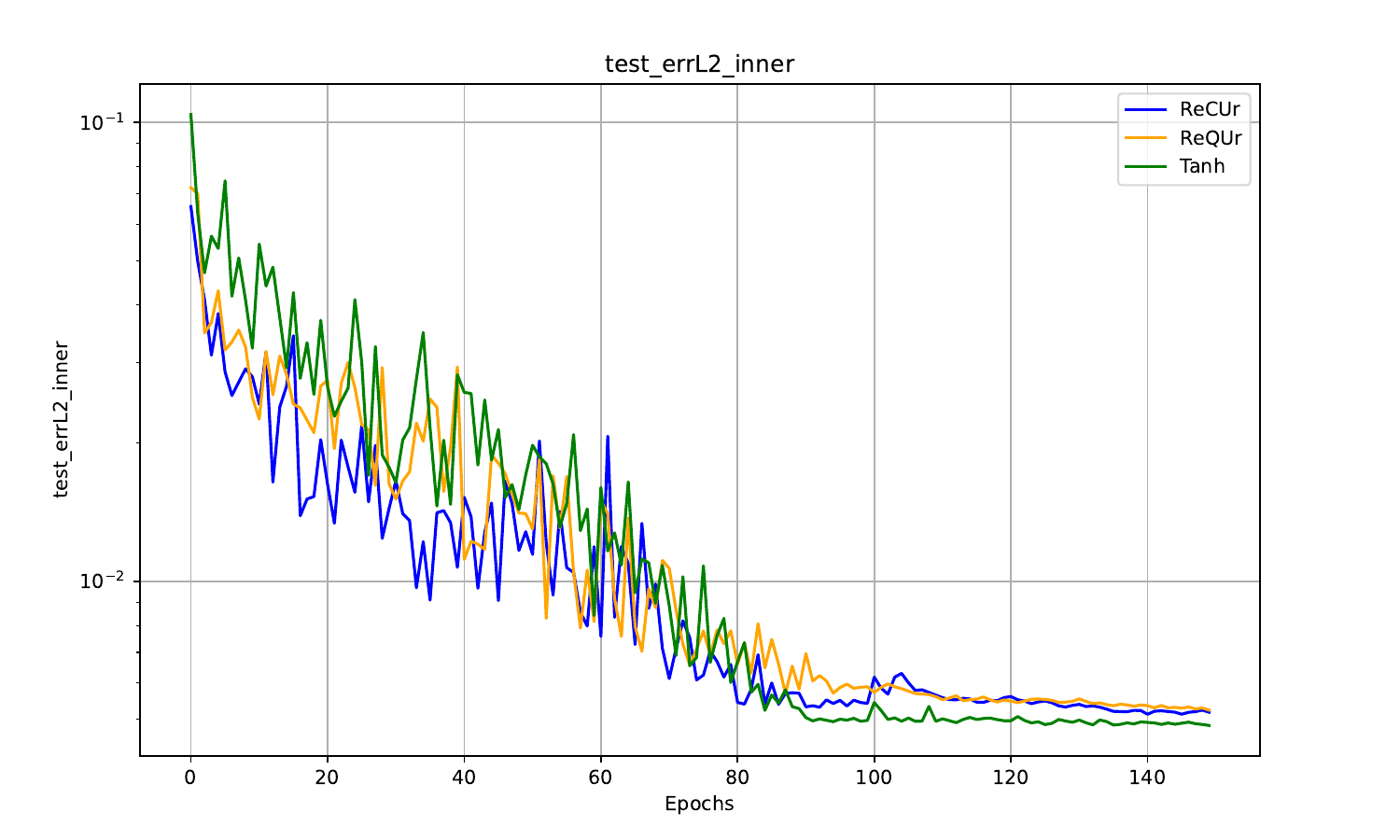}}
\subfloat[NatDRM: boundary error]{\label{fig:ex3NDBerrorb}\includegraphics[width=0.49\textwidth]{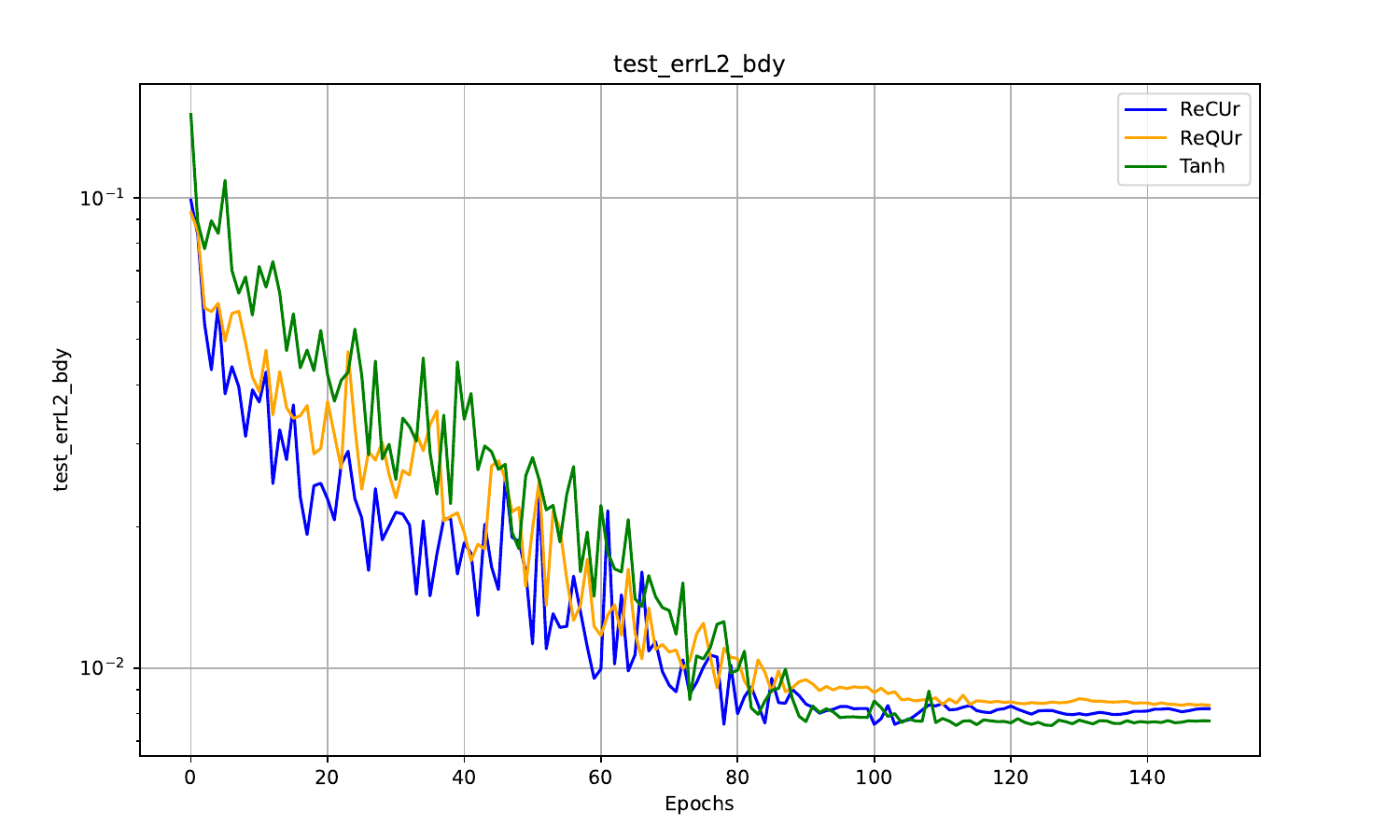}}
 \\
\subfloat[DRM: $beta=100$ inner error]{\label{fig:ex3DRMerrori}\includegraphics[width=0.49\textwidth]{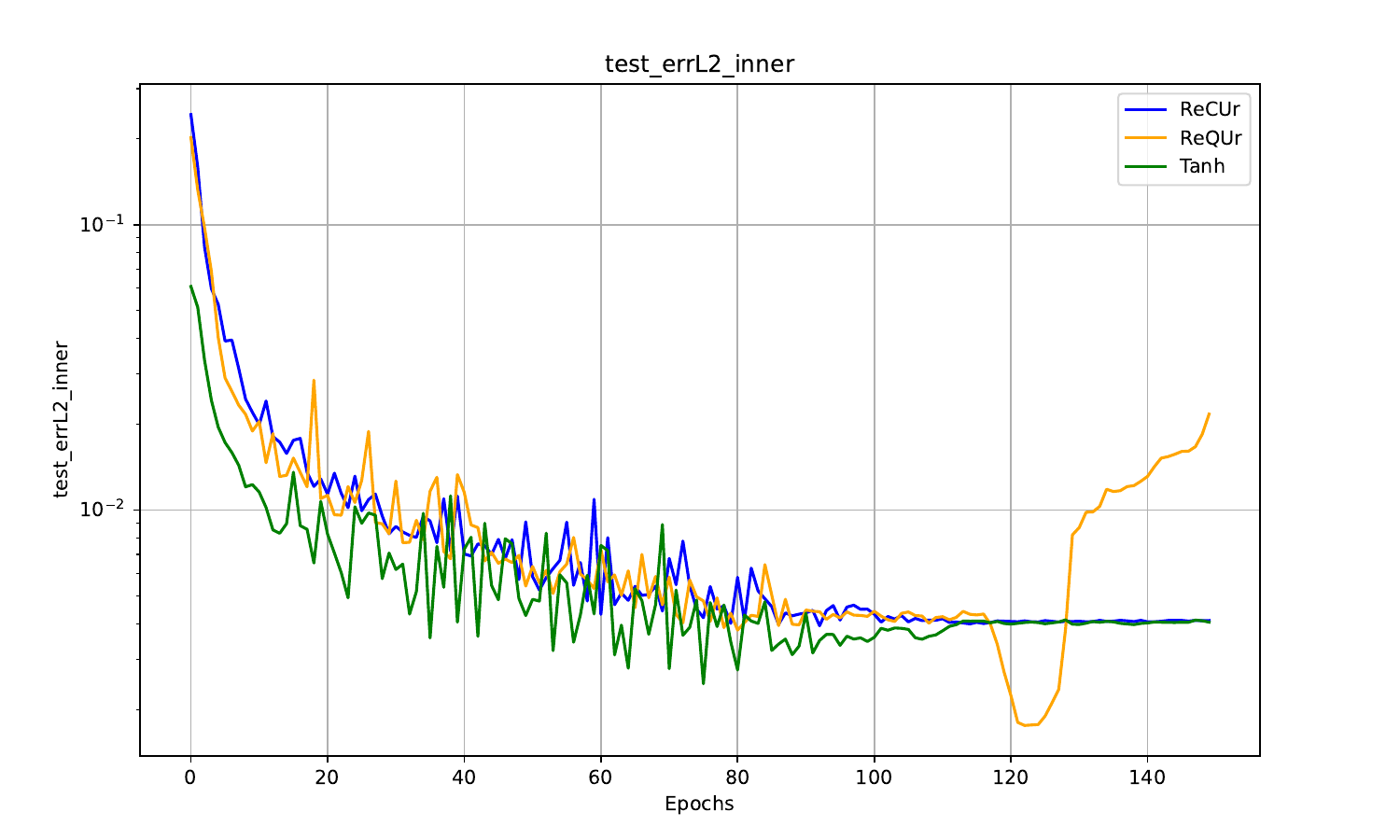}}
\subfloat[DRM: $beta=100$ boundary error]{\label{fig:ex3DRMerrorb}\includegraphics[width=0.49\textwidth]{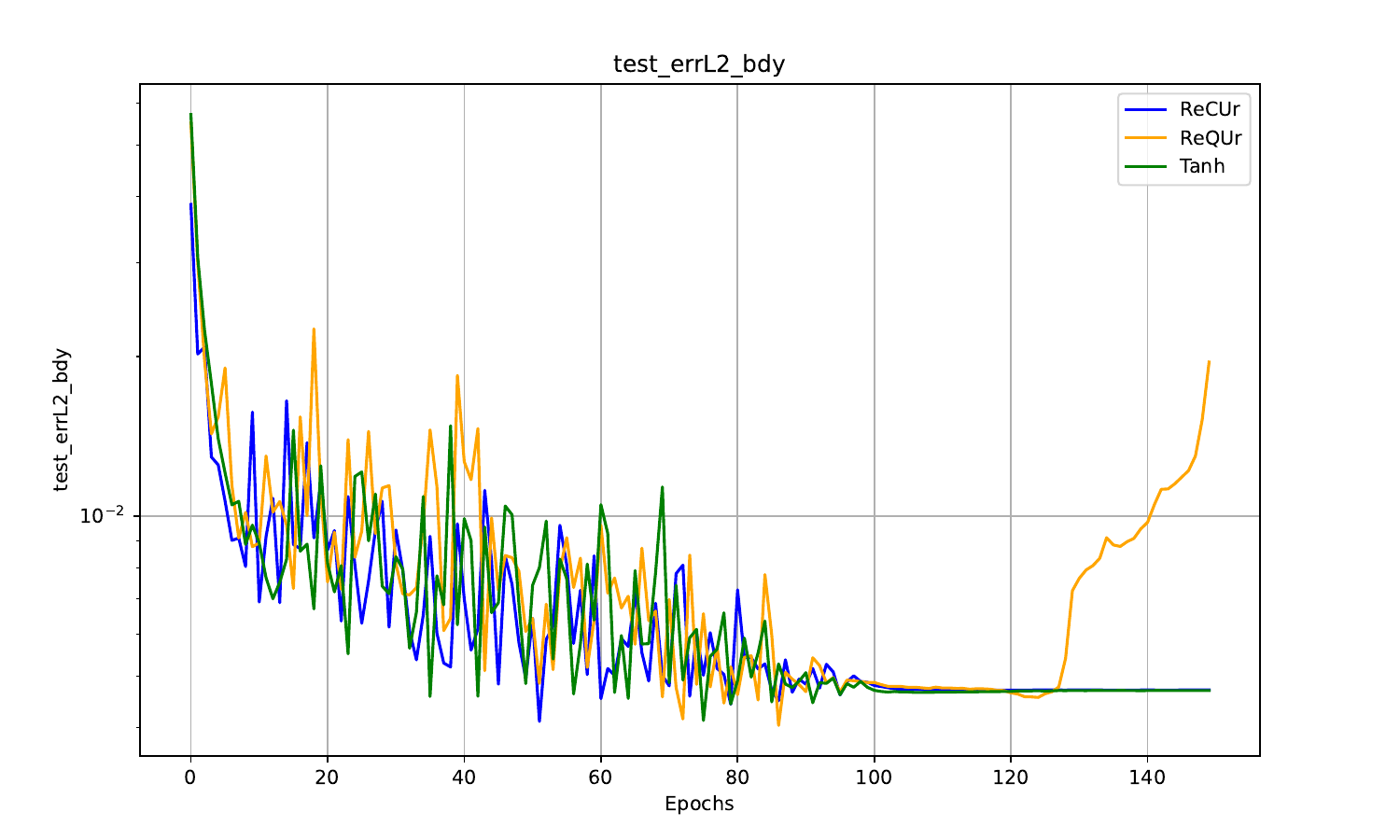}}
\caption{Relative $L^2$ interior and boundary errors for NatDRM and DRM ($\beta=100$) during training on the 3D benchmark \eqref{26}.}
\label{fig1}
\end{figure}

\begin{figure}[tbhp]
\centering
\subfloat[NatDRM: inner error]{\label{fig:ex1NDBerrori}\includegraphics[width=0.49\textwidth]{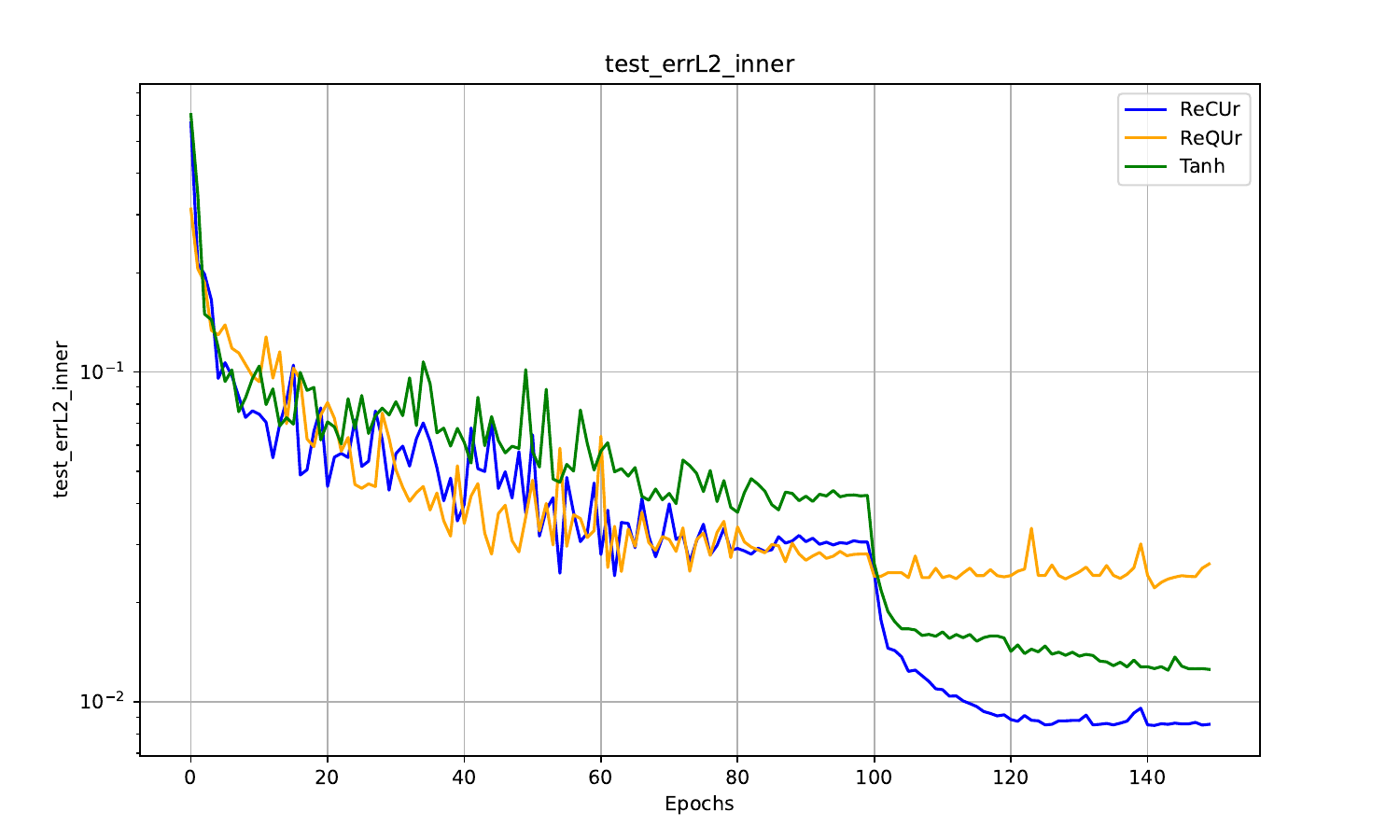}}
\subfloat[NatDRM: boundary error]{\label{fig:ex1NDBerrorb}\includegraphics[width=0.49\textwidth]{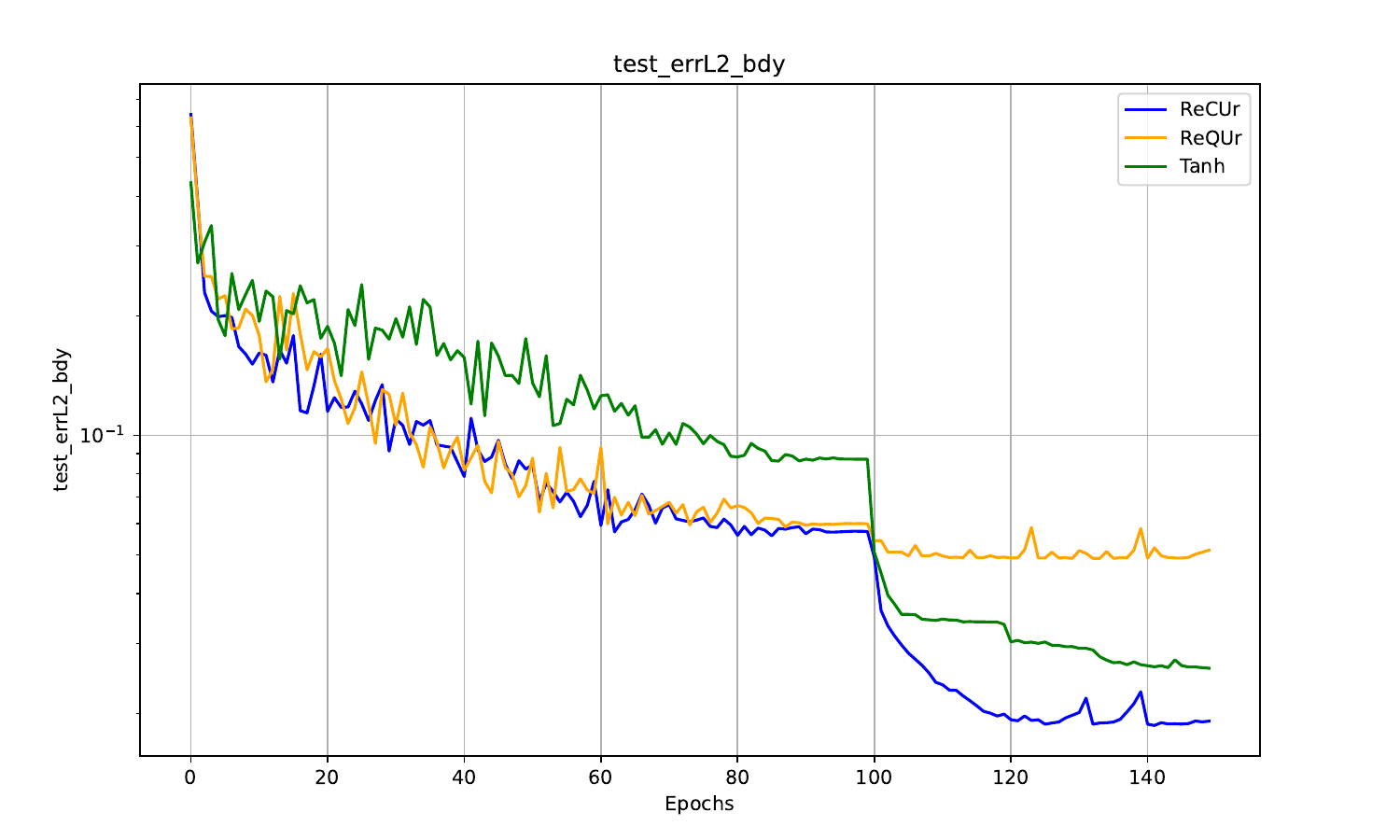}}
\\
\subfloat[DRM: $beta=100$ inner error]{\label{fig:ex1DRMerrori}\includegraphics[width=0.49\textwidth]{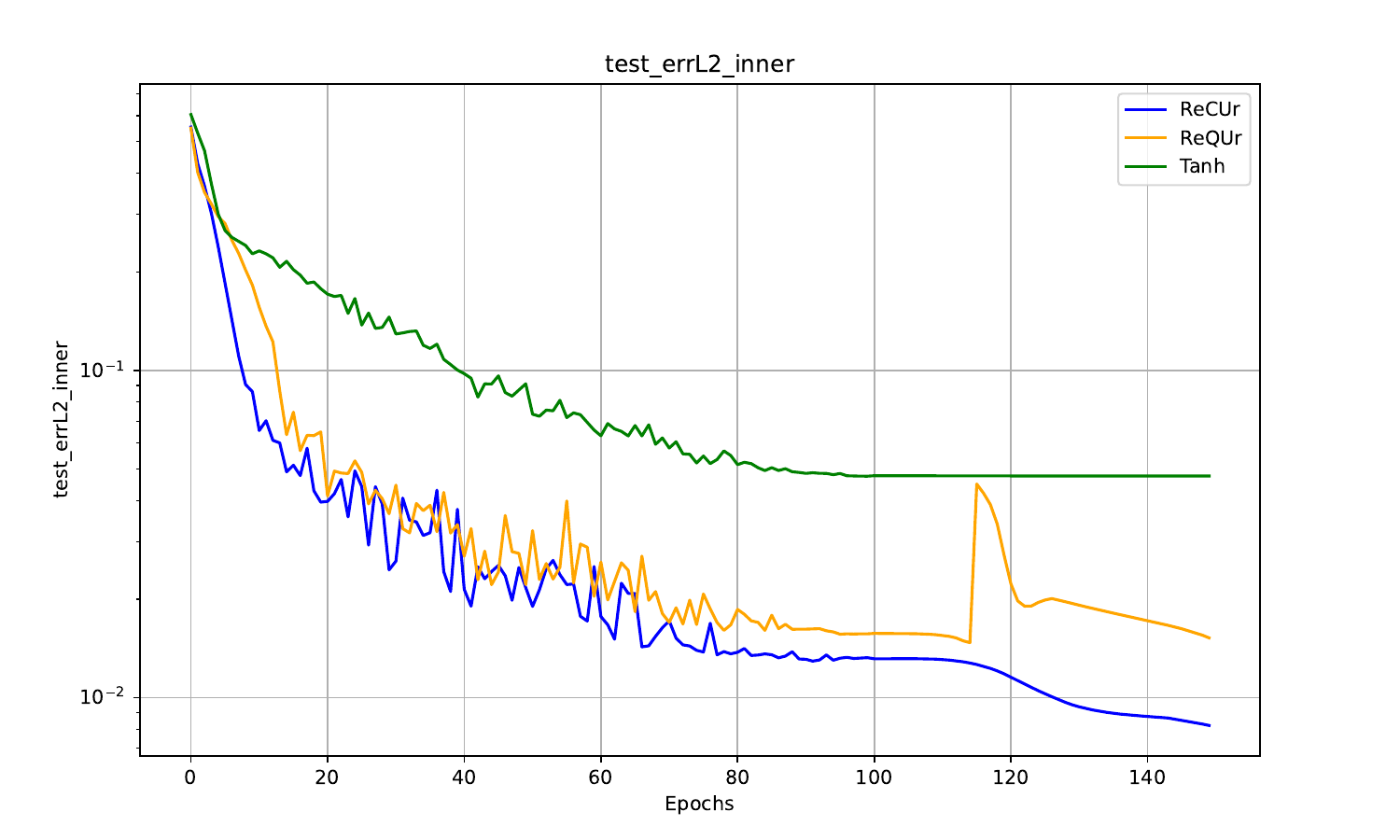}}
\subfloat[DRM: $beta=100$ boundary error]{\label{fig:ex1DRMerrorb}\includegraphics[width=0.49\textwidth]{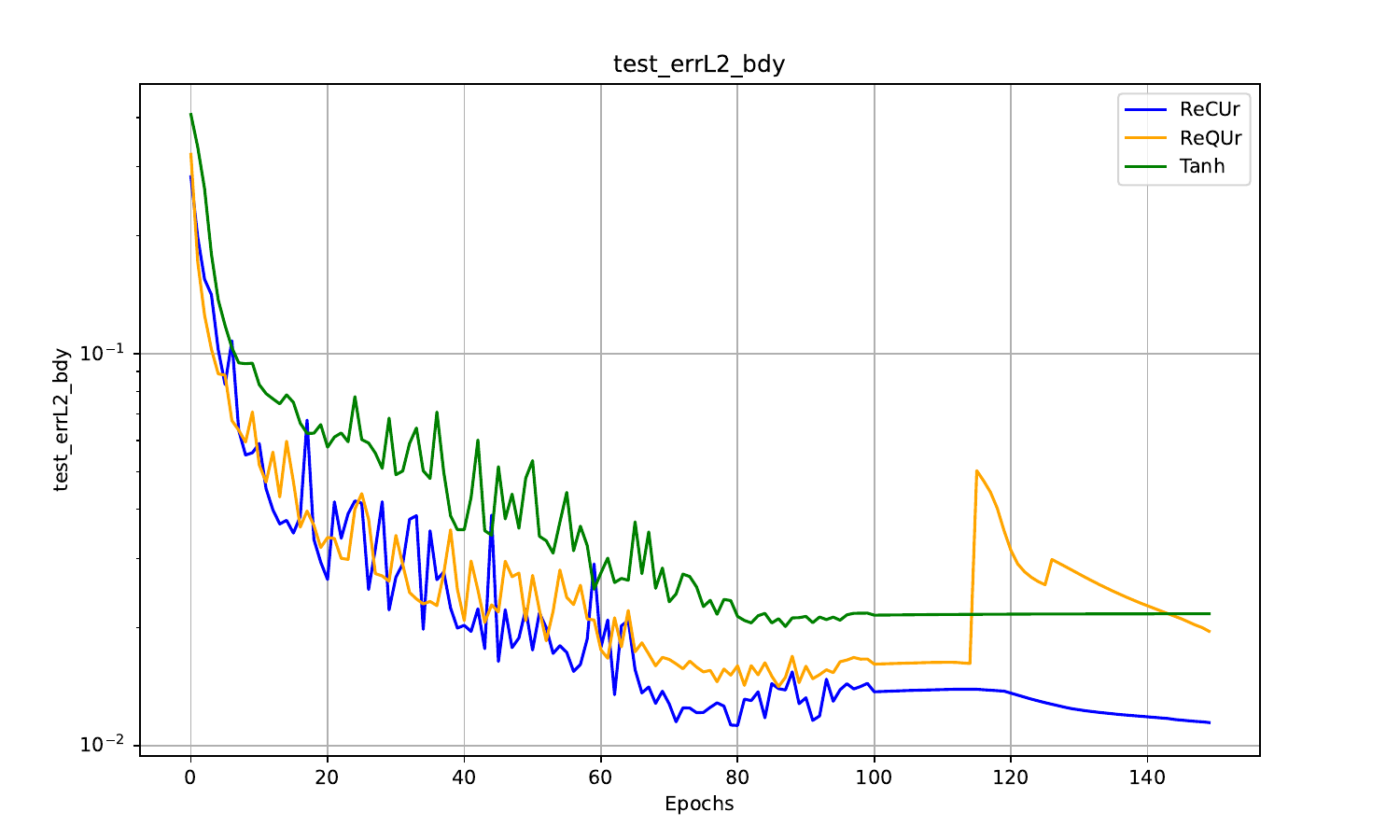}}
\caption{Relative $L^2$ interior and boundary errors for NatDRM and DRM ($\beta=100$) during training on the 4D benchmark \eqref{24}.}
\label{fig2}
\end{figure}

\begin{figure}[tbhp]
\centering
\subfloat[NatDRM: inner error]{\label{fig:ex2NDBerrori}\includegraphics[width=0.49\textwidth]{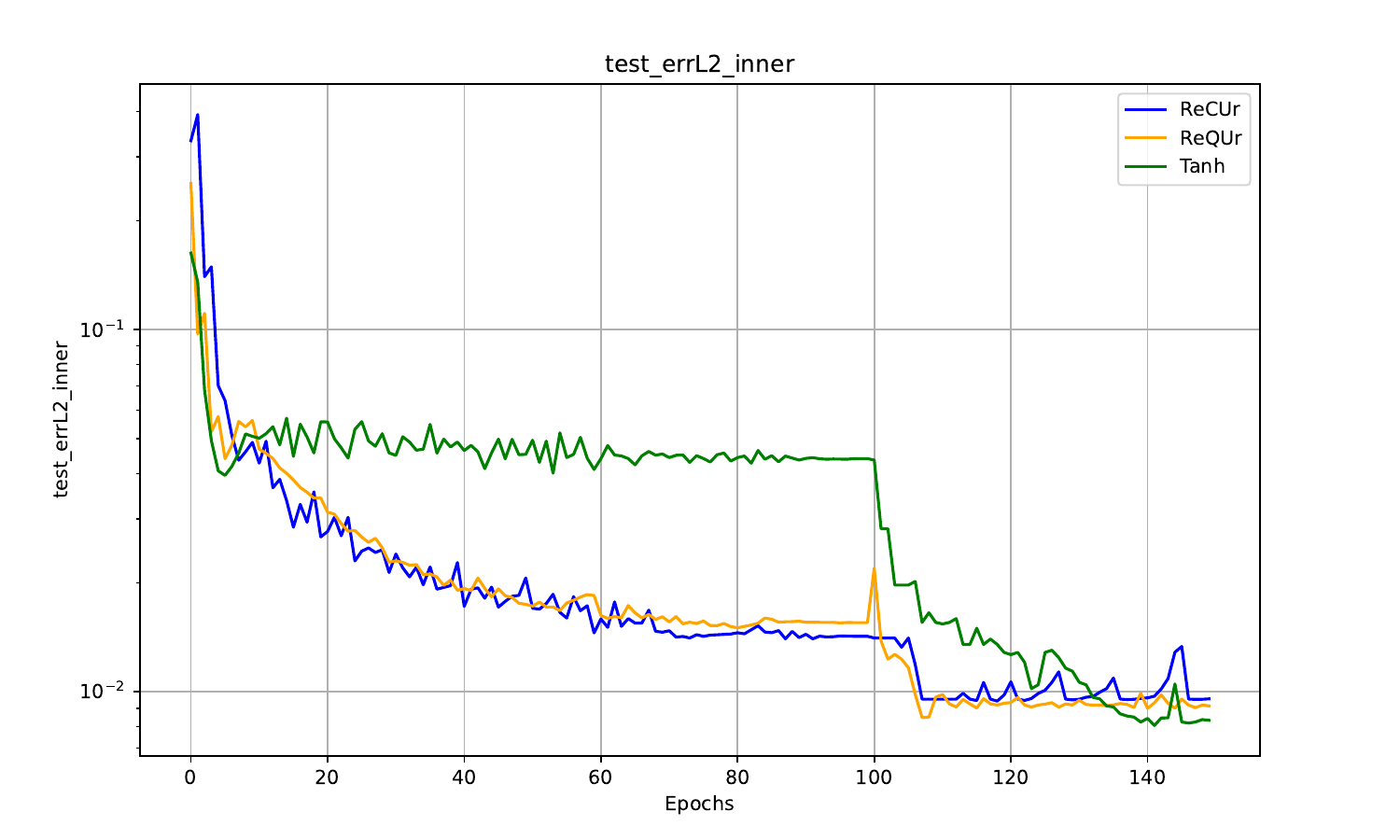}}
\subfloat[NatDRM: boundary error]{\label{fig:ex2NDBerrorb}\includegraphics[width=0.49\textwidth]{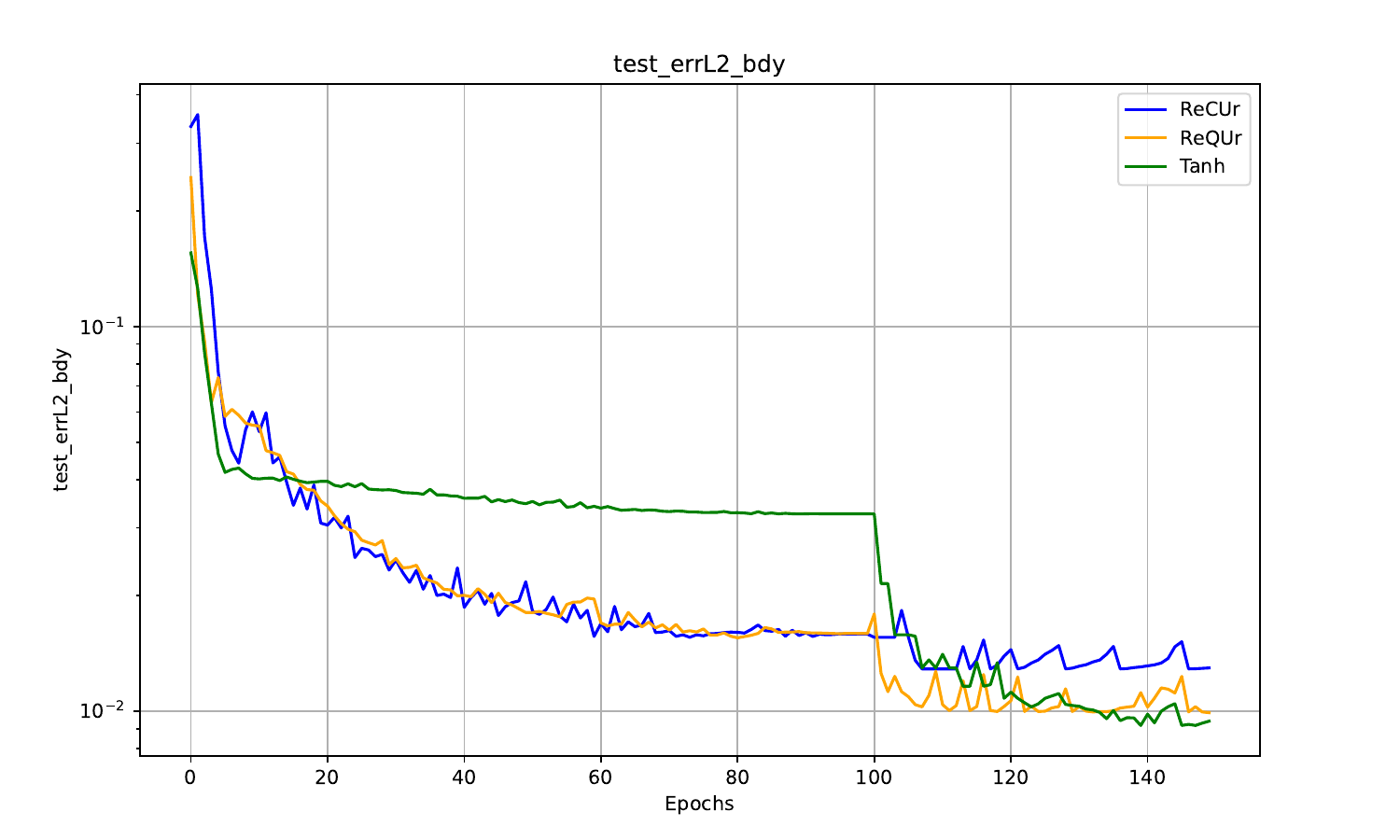}}
\\
\subfloat[DRM: $beta=100$ inner error]{\label{fig:ex2DRMerrori}\includegraphics[width=0.49\textwidth]{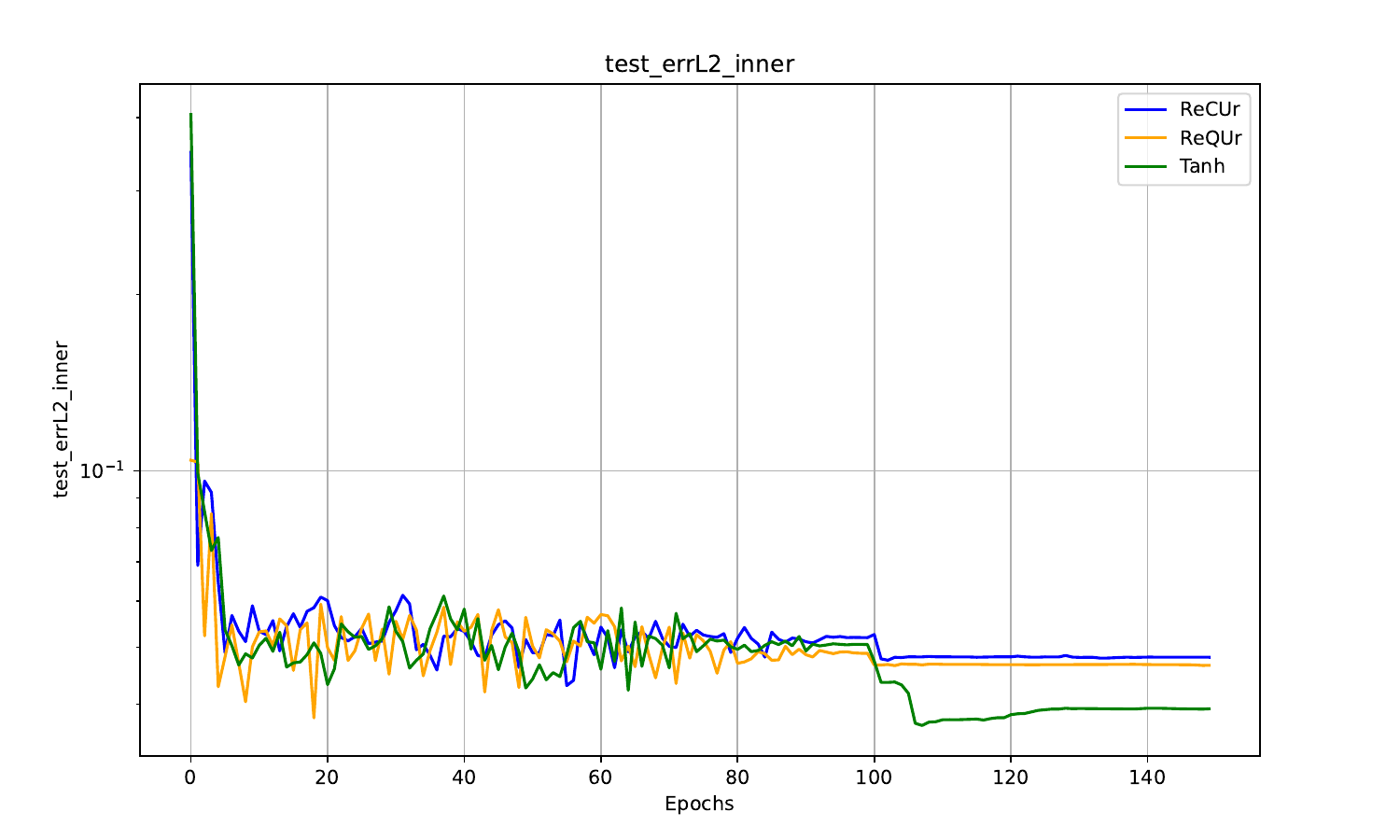}}
\subfloat[DRM: $beta=100$ boundary error]{\label{fig:ex2DRMerrorb}\includegraphics[width=0.49\textwidth]{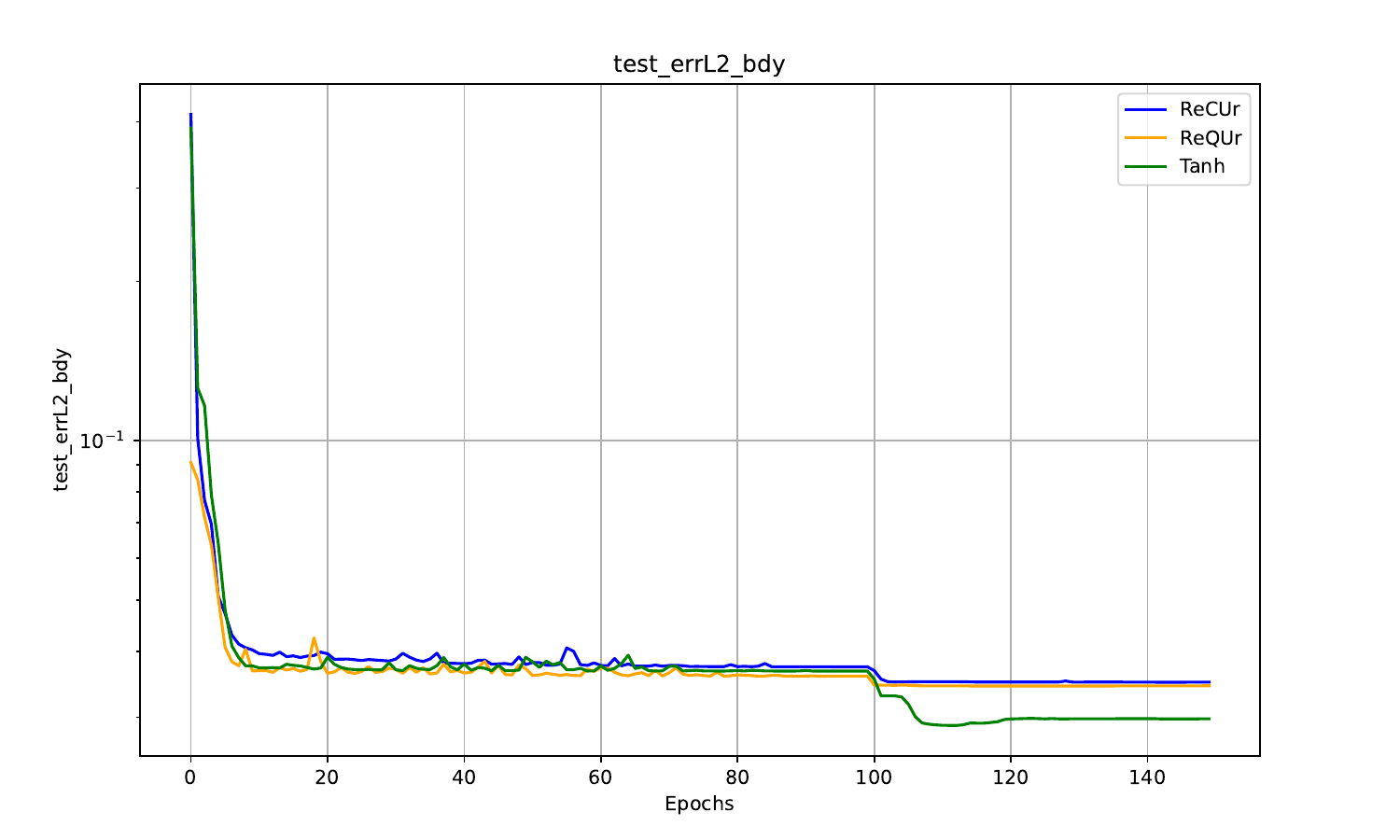}}
\caption{Relative $L^2$ interior and boundary errors for NatDRM and DRM ($\beta=100$) during training on the 6D benchmark \eqref{25}.}
\label{fig3}
\end{figure}

From these figures, the following conclusions can be drawn:

(1) $\mathbf{Robustness \ of \ activation \ functions.}$ Within the NatDRM framework, all three activation functions converge stably. Among them, ReCUr yields the lowest final error in most cases, and this advantage is particularly prominent in the 4D example (\Cref{fig2}). This is because ReCUr possesses a smooth Hessian together with higher-order function smoothness, providing reliable second‑order information for L‑BFGS and matching well with the regularity required by the curl and gradient operations in the variational formulation. The convergence behavior of ReQUr shows different characteristics: its first derivative is continuous, but its second derivative is discontinuous at breakpoints, leading to a discontinuous Hessian. This structure makes the optimization slightly inferior to ReCUr, yet after sufficient training, the final accuracy of the two activations remains on the same order. Tanh also has a smooth Hessian and thus exhibits a relatively stable convergence process; however, it is prone to vanishing gradients in saturation regions, which somewhat limits its global approximation capability in high-dimensional spaces.

(2) $\mathbf{Synchronized \ convergence \ of \ NatDRM.}$ A key qualitative difference is that in NatDRM the interior and boundary errors decrease almost synchronously (see \Cref{fig1}-\Cref{fig3} (a) and (b)). This indicates that the decomposition strategy effectively encodes boundary information into the auxiliary potentials \(\tilde{u}\) and \(\varphi\), allowing the interior and boundary solutions to be optimized cooperatively during training. In contrast, for DRM even with a moderate \(\beta=100\) as shown in the figures, the boundary error drops extremely fast while the interior error converges slowly or even plateaus. This is a typical symptom of the penalty term dominating the loss function, causing the optimization to overly favor boundary fitting and neglect interior accuracy.

(3) $\mathbf{Training \ stability \ across \ dimensions.}$ The training curves for the three test cases show that NatDRM achieves stable convergence within 150 epochs, with smooth and monotonic error decay. DRM, even with a moderately chosen \(\beta=100\), exhibits a relatively high plateau in the interior error after fast boundary convergence, and the interior error remains about one order of magnitude higher than that of NatDRM. For PINN, although both interior and boundary errors can eventually converge (see \Cref{tab:1} for accuracy data), the training process often exhibits violent oscillations, especially in high dimensions, indicating greater sensitivity to learning rate and optimizer settings. In all tested dimensions, NatDRM offers the most robust and trouble‑free training behavior.

\subsubsection{Visualization of Numerical Solutions and Error Distributions}

To further assess the solution quality of NatDRM on problems with varying complexity, we visualize numerical results with dimension-adaptive slice plotting strategies. For the three-dimensional case, stacked translucent slices are adopted to intuitively exhibit the full three-dimensional field structure of numerical solutions and corresponding pointwise errors; for the 4D and 6D high-dimensional problems, we present two‑dimensional cross‑sections of the numerical solutions obtained by NatDRM and DRM together with the exact solution, as well as the pointwise \(L^2\) error distribution of NatDRM. \Cref{fig:4} corresponds to the 3D variable‑coefficient problem \eqref{26}, \cref{fig:5} to the 4D semilinear problem \eqref{27}, and \cref{fig:6} to the 6D separable problem \eqref{25}.

\begin{figure}[tbhp]
\centering
\subfloat[DRM]{\label{fig:ex3DRMSolution}\includegraphics[width=0.49\textwidth]{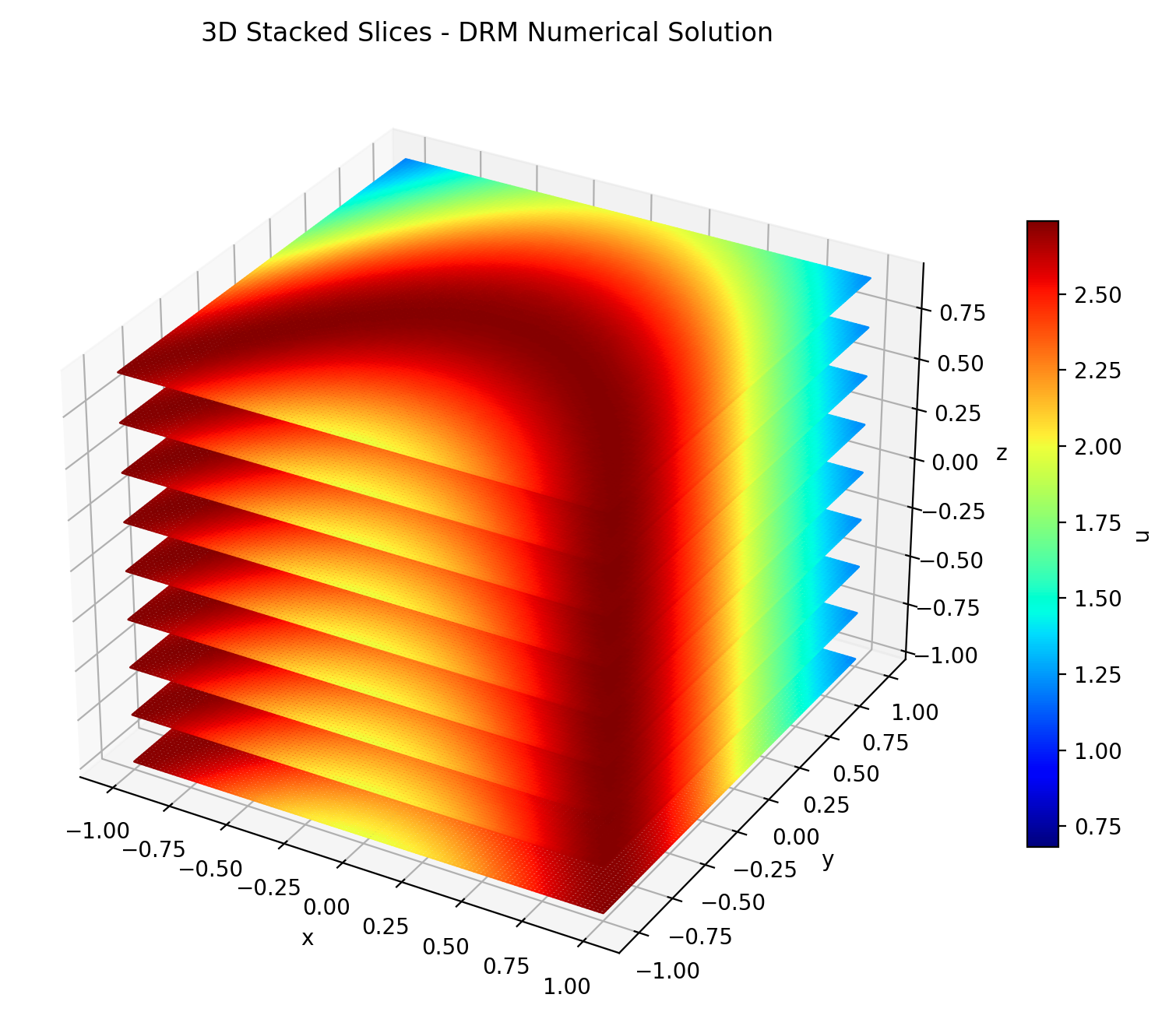}}
\subfloat[$L^2$ error distribution of DRM]{\label{fig:ex3DRMError}\includegraphics[width=0.49\textwidth]{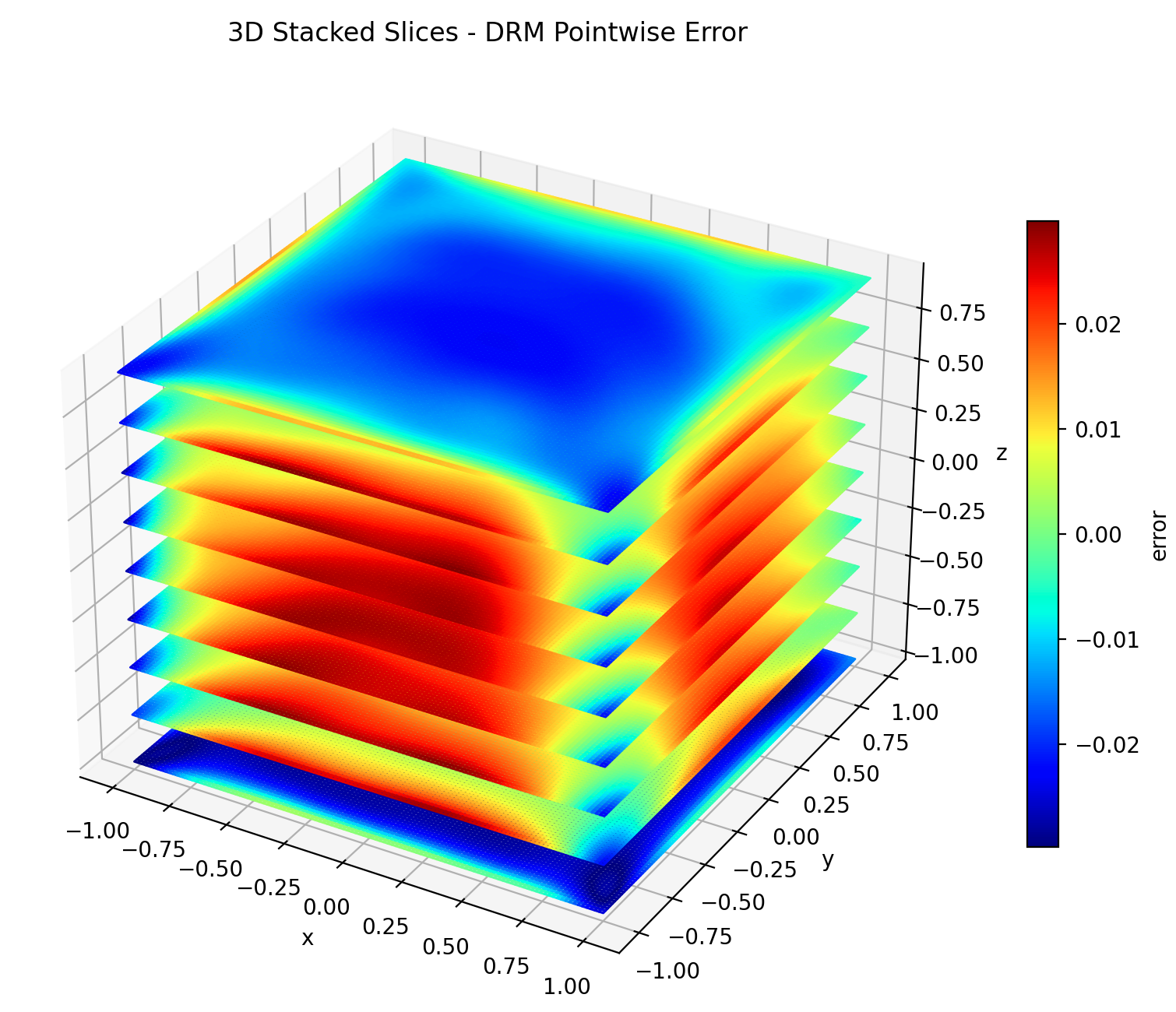}}
\\
\subfloat[NatDRM]{\label{fig:ex3NDBSolution}\includegraphics[width=0.49\textwidth]{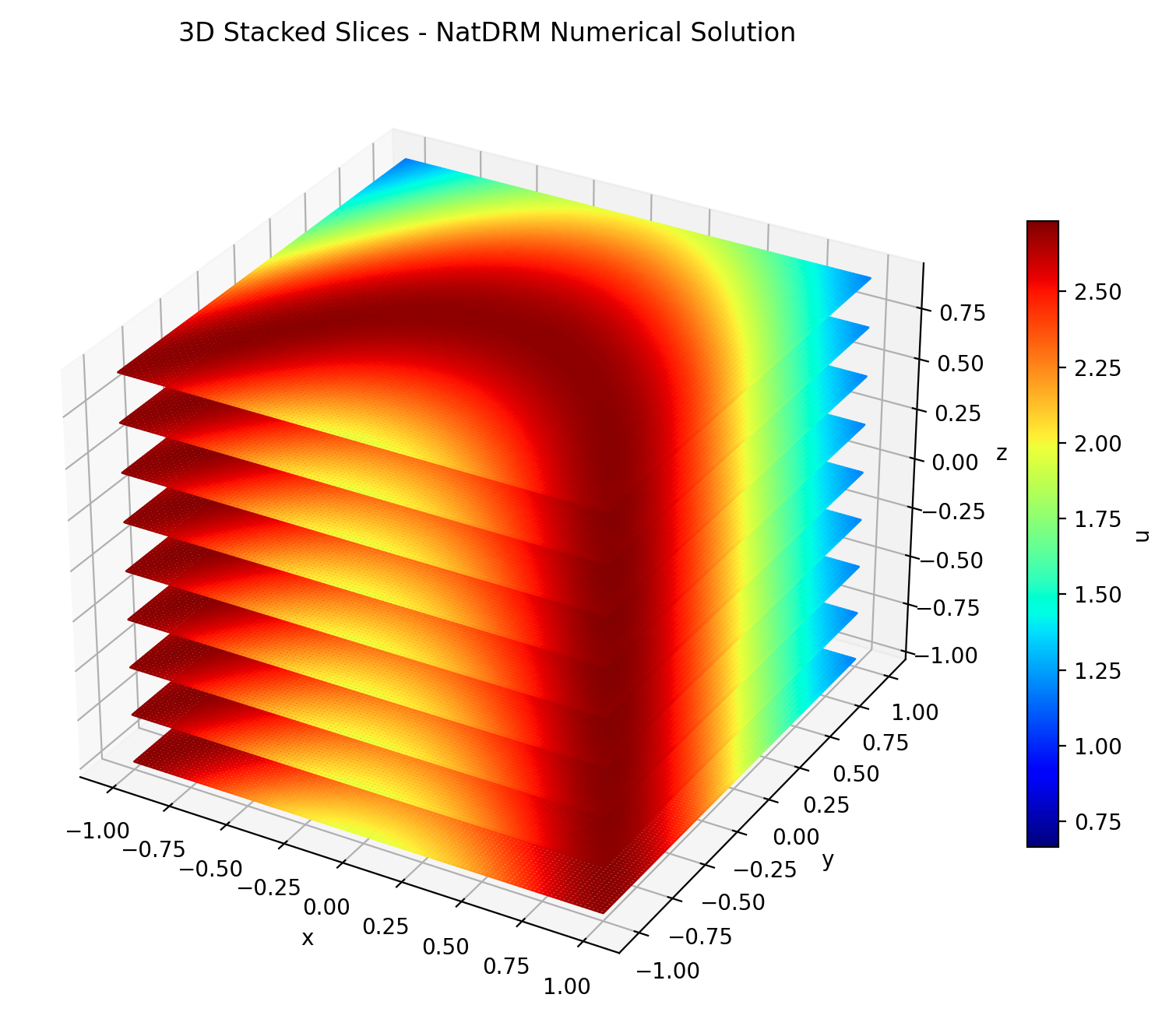}}
\subfloat[$L^2$ error distribution of NatDRM]{\label{fig:ex3NDBError}\includegraphics[width=0.49\textwidth]{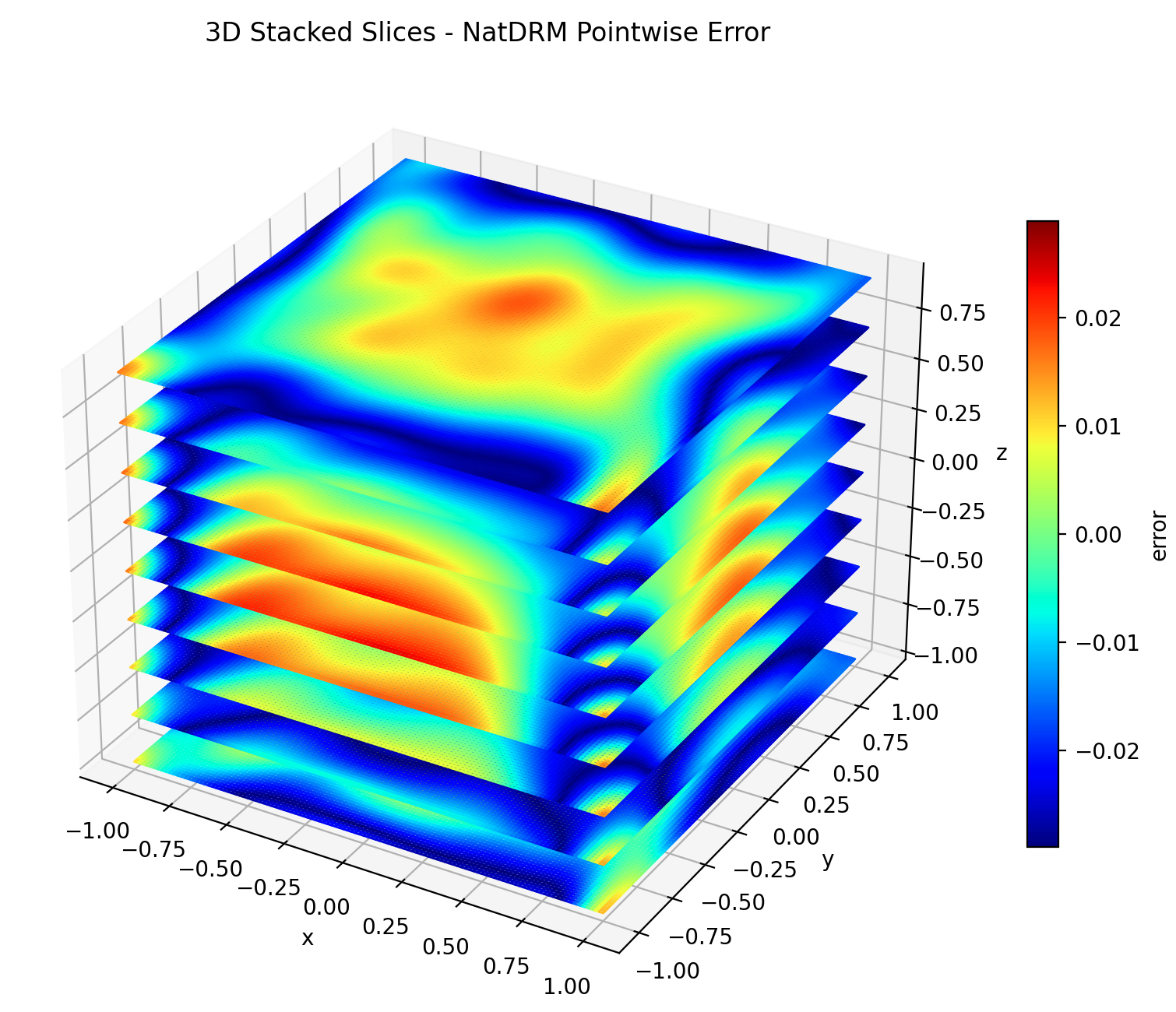}}
\caption{Numerical solutions of the 3D problem \eqref{26} obtained by NatDRM and DRM, and their corresponding $L^2$ error distributions.}
\label{fig:4}
\end{figure}

\begin{figure}[tbhp]
\centering
\subfloat[NatDRM]{\label{fig:ex4NDBSolution}\includegraphics[width=0.49\textwidth]{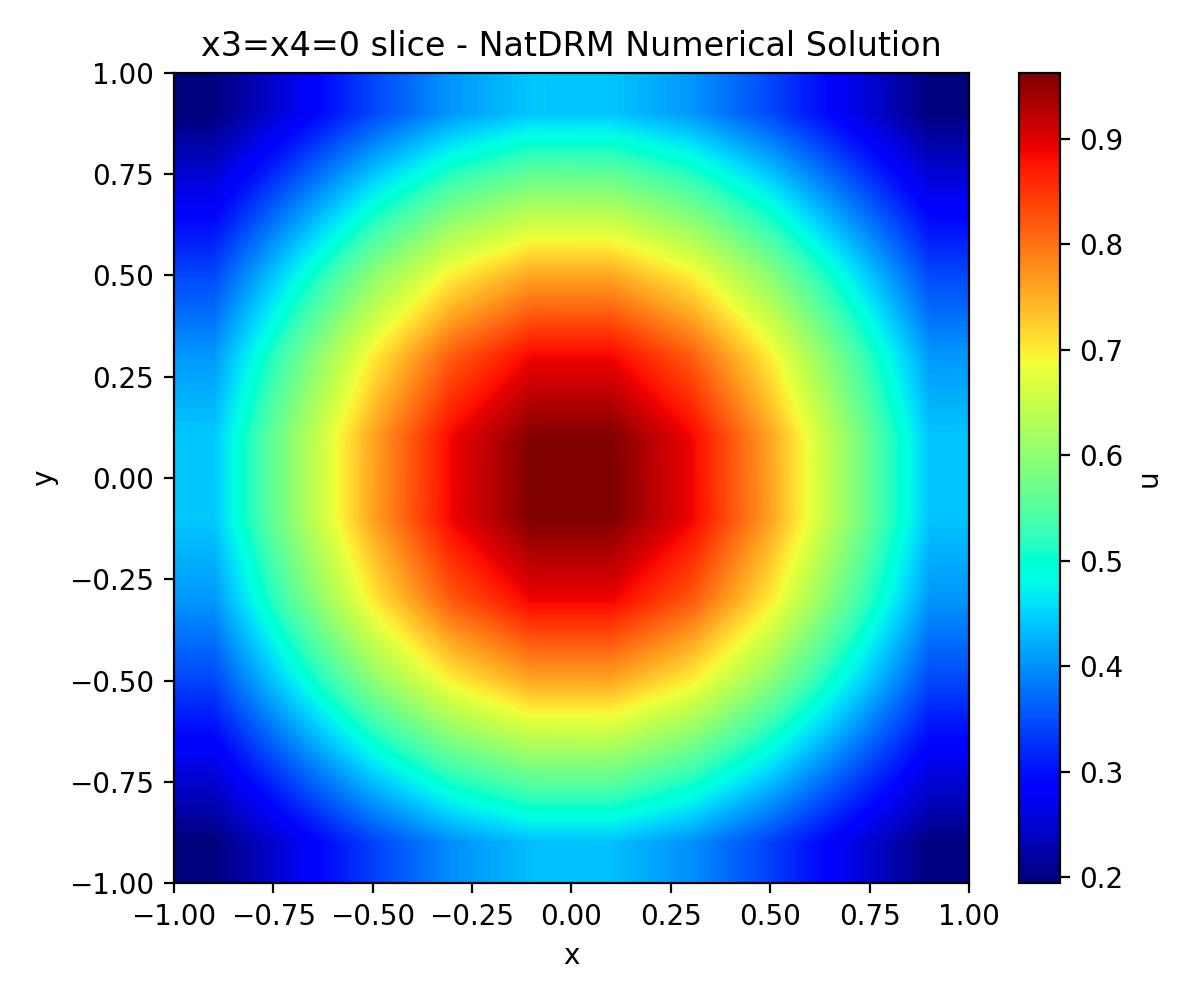}}
\subfloat[DRM]{\label{fig:ex4DRMSolution}\includegraphics[width=0.49\textwidth]{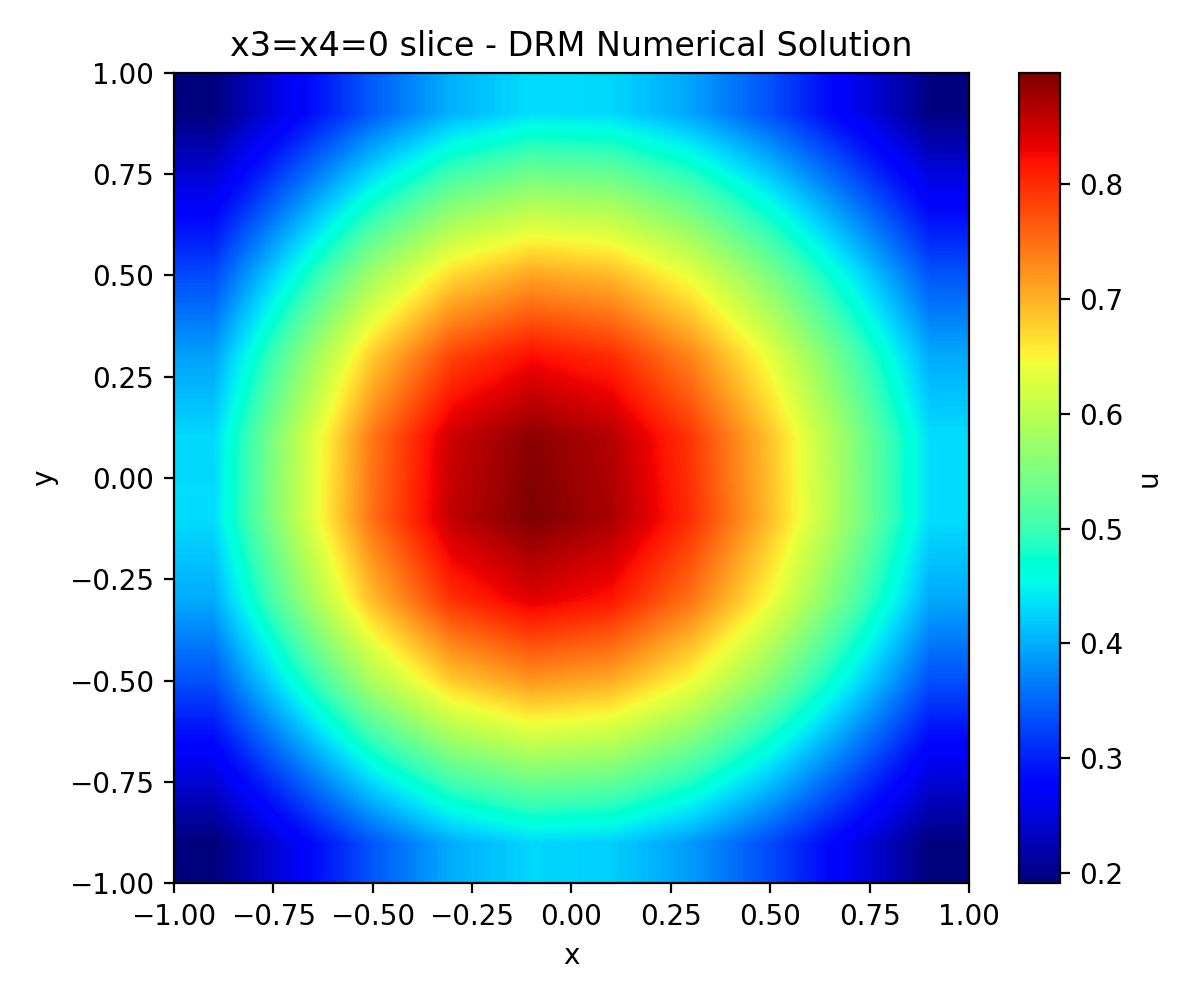}}
\\
\subfloat[Exact Solution]{\label{fig:ex4ExactSolution}\includegraphics[width=0.49\textwidth]{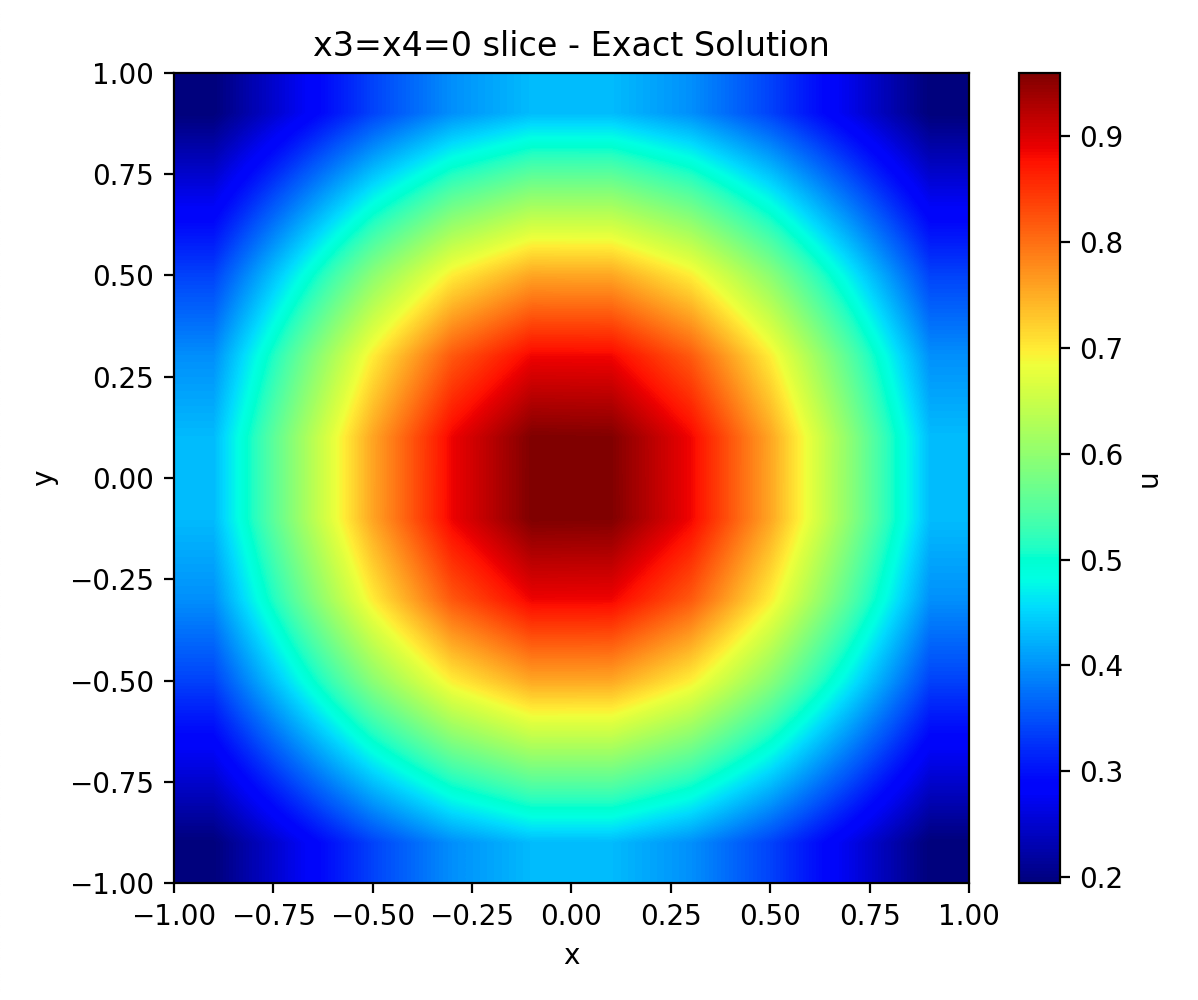}}
\subfloat[$L^2$ error distribution of NatDRM]{\label{fig:ex4NDBError}\includegraphics[width=0.49\textwidth]{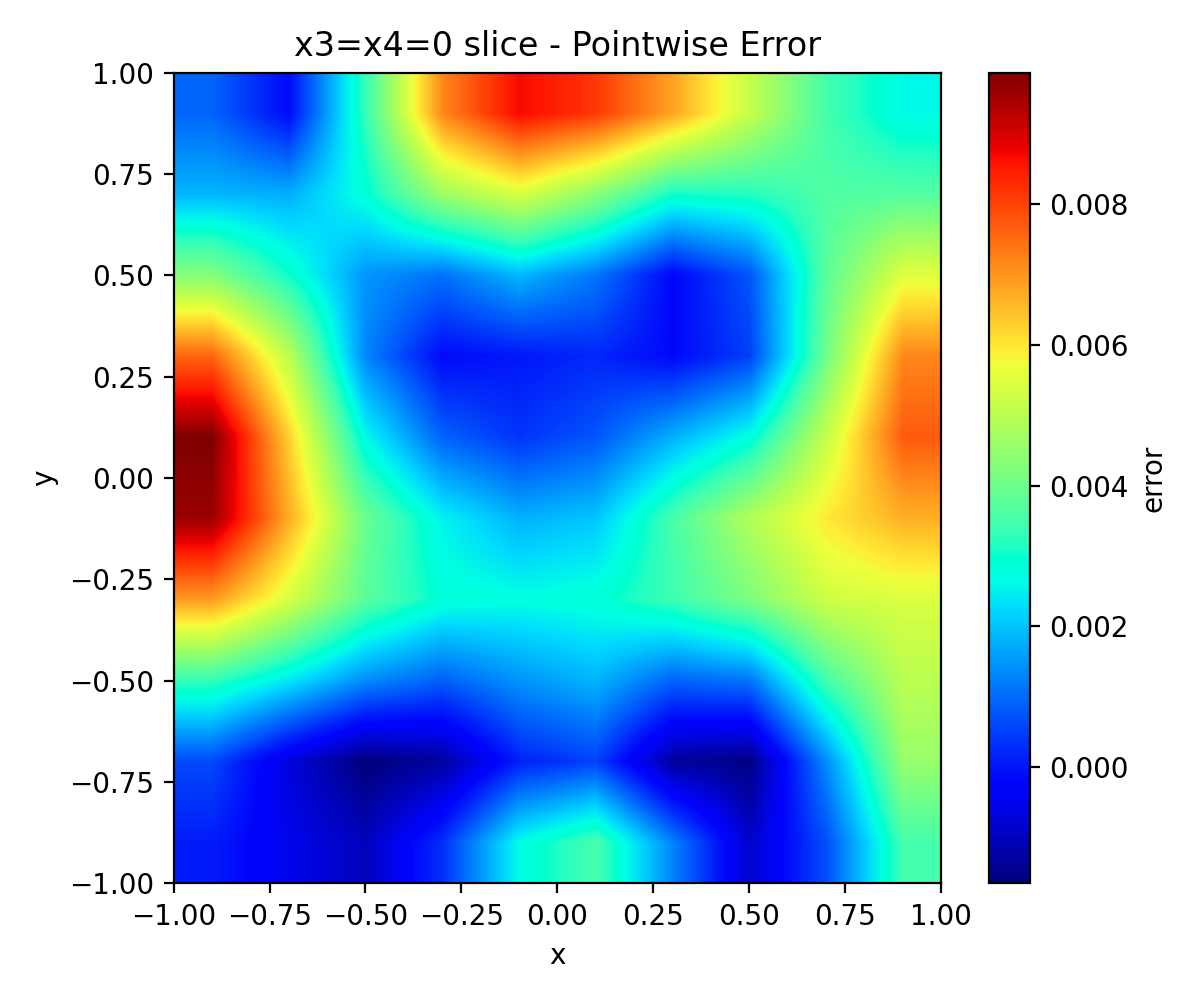}}
\caption{The distribution of numerical solutions of \eqref{27} obtained by different methods and $L^2$ error distribution of NatDRM in 4D.}
\label{fig:5}
\end{figure}

\begin{figure}[tbhp]
\centering
\subfloat[NatDRM]{\label{fig:ex2NDBSolution}\includegraphics[width=0.49\textwidth]{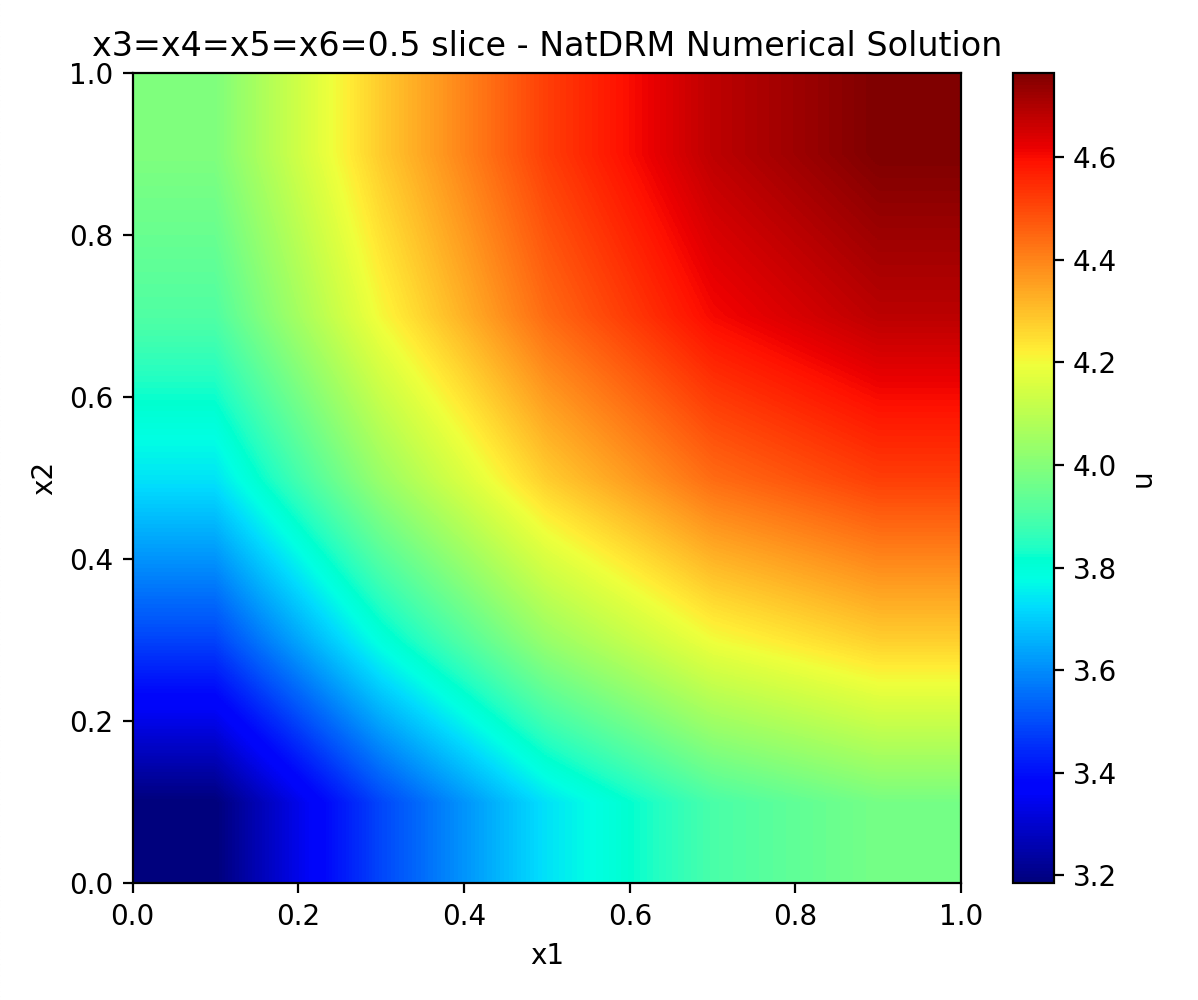}}
\subfloat[DRM]{\label{fig:ex2DRMSolution}\includegraphics[width=0.49\textwidth]{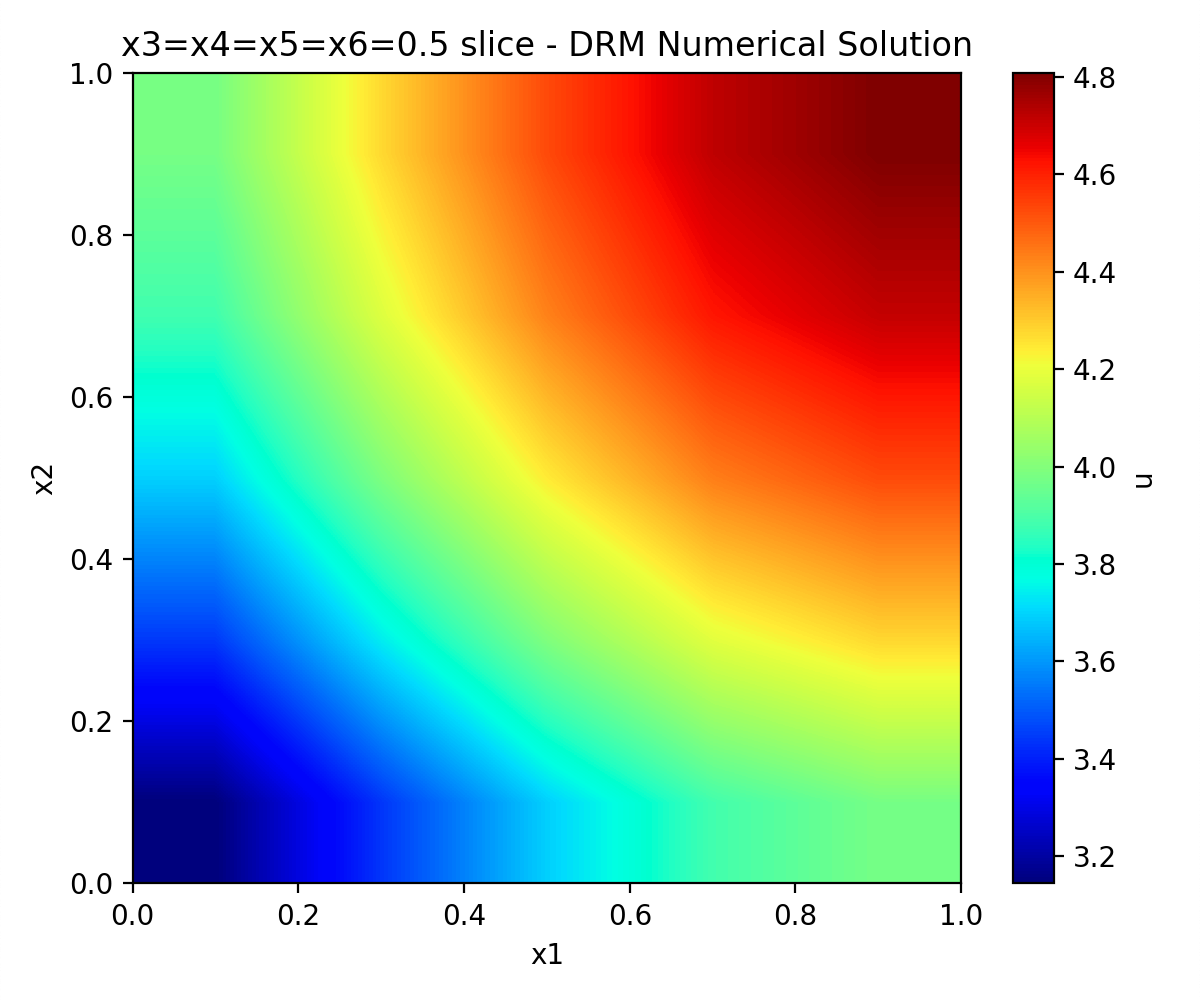}}
\\
\subfloat[Exact Solution]{\label{fig:ex2ExactSolution}\includegraphics[width=0.49\textwidth]{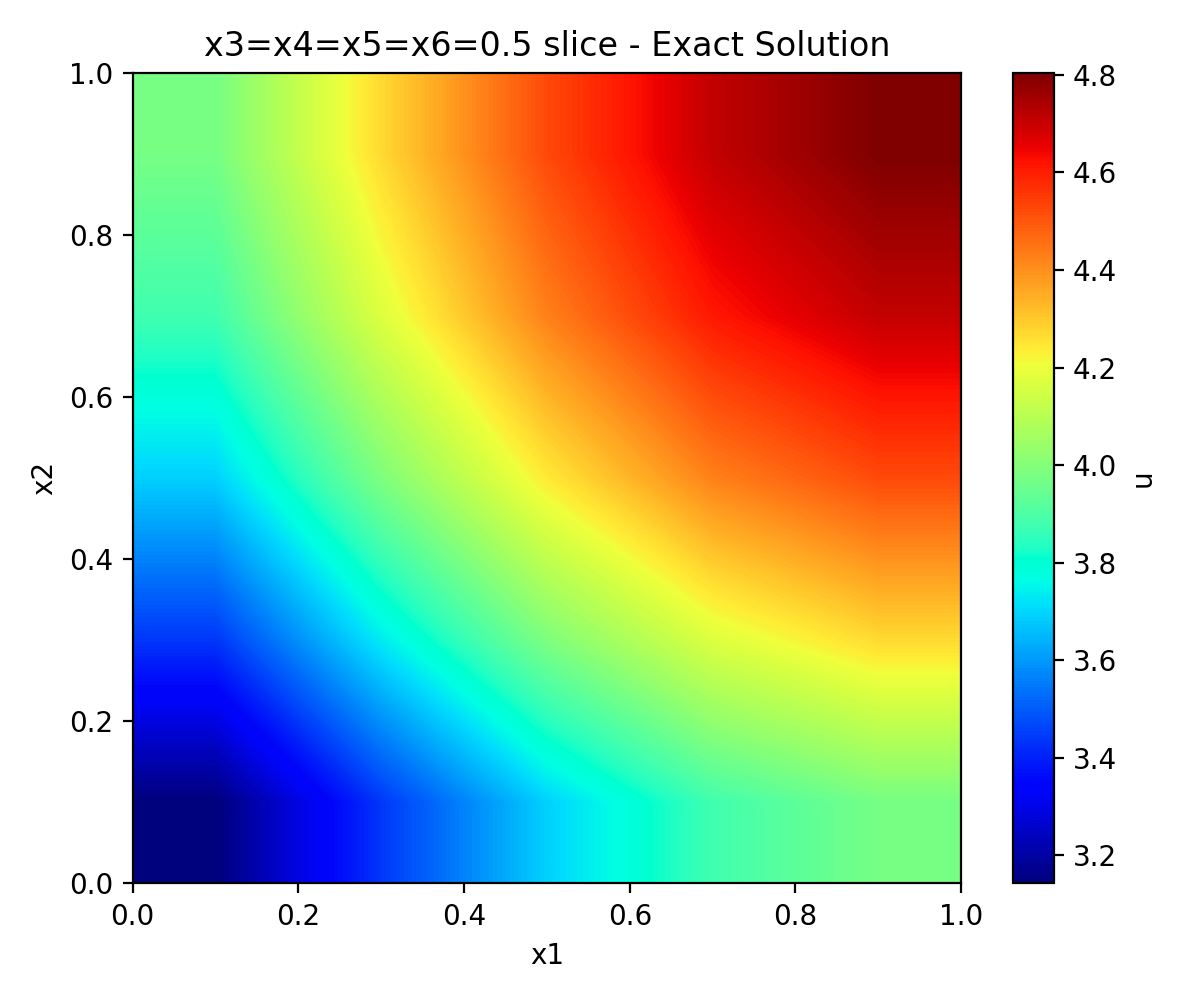}}
\subfloat[$L^2$ error distribution of NatDRM]{\label{fig:ex2NDBError}\includegraphics[width=0.49\textwidth]{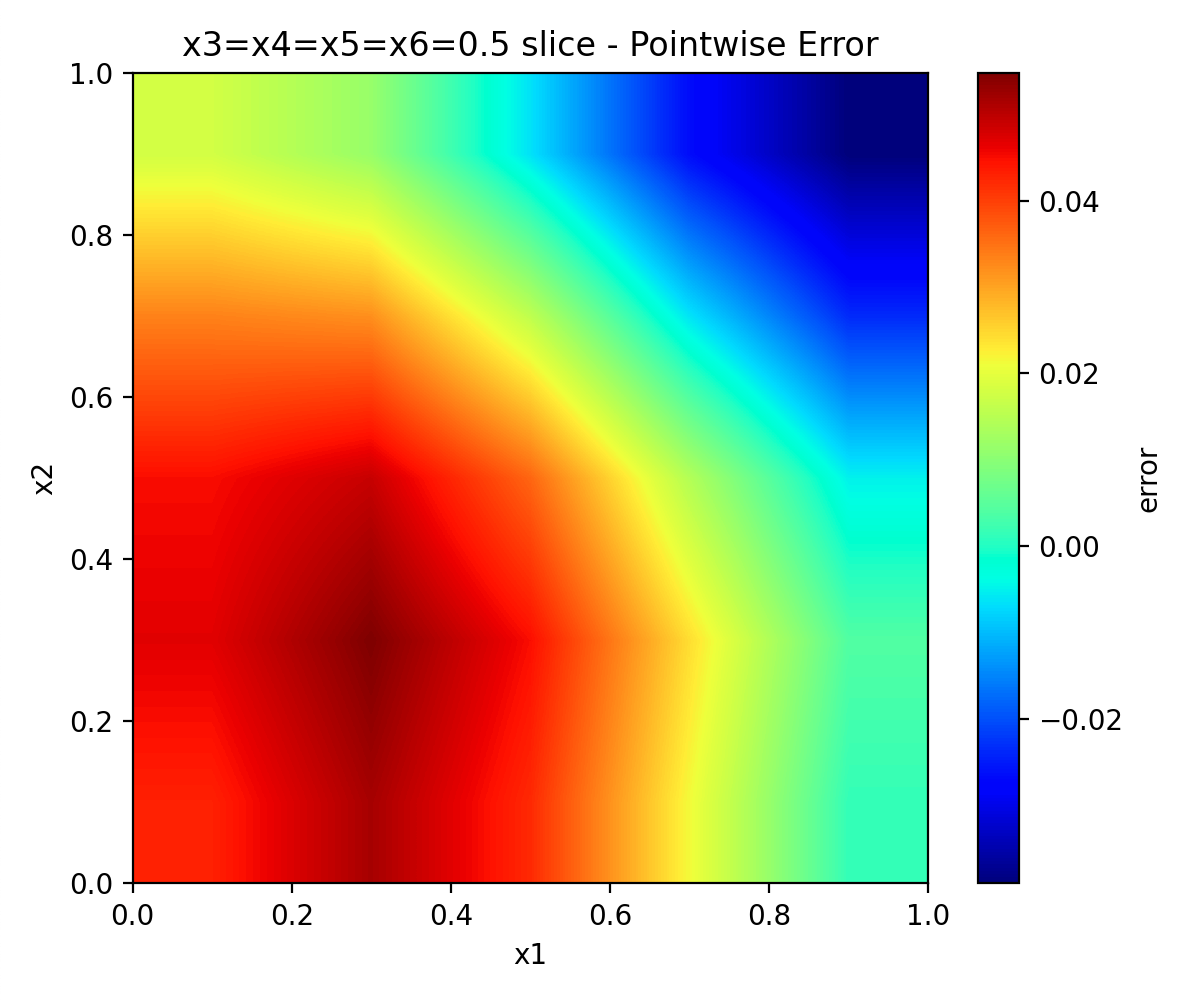}}
\caption{The distribution of numerical solutions of \eqref{25} obtained by different methods and $L^2$ error distribution of NatDRM in 6D.}
\label{fig:6}
\end{figure}
From these visualizations we observe the following:

NatDRM consistently reproduces the exact solution with high fidelity. In all three cases, the reconstructed solution of NatDRM matches the exact solution closely in both shape and magnitude. The \(L^2\) error distributions (panels (d) in \Cref{fig:4}-\Cref{fig:6}) show that errors are uniformly distributed between the boundary and the interior, with no significant concentration near the boundary or in regions with steep gradients, maintaining consistent accuracy throughout the entire computational domain. This demonstrates that NatDRM effectively handles variable coefficients \eqref{26} as well as semilinearity \eqref{27} without any special treatment.

DRM captures the main profile but exhibits visible boundary deviations. For the 3D variable‑coefficient problem (\Cref{fig:4}), DRM produces a solution that is generally correct in the interior, but noticeable discrepancies appear near the boundary layer. Comparing panels (b) and (d) of \Cref{fig:4}, it is evident that DRM has significantly larger boundary errors than NatDRM.
This is consistent with the penalty mechanism: a finite \(\beta=100\) cannot enforce the Dirichlet condition exactly, leading to a persistent boundary error that propagates slightly inward.

High‑dimensional applicability of NatDRM is further validated in the tested 6D setting (\Cref{fig:6}): cross‑sections match the exact solution well under our quadrature and network configuration. The semilinear term in \eqref{27} and the variable coefficient in \eqref{26} did not require penalty tuning in our experiments, using the extensions in \cref{sec:3-extensions}.

Integrating the numerical experiments from 3D to 6D, we draw the following conclusions:

(1) $\mathbf{Parameter‑free \ nature:}$ The proposed NatDRM successfully extends the penalty‑free strategy to high dimensions, achieving stable convergence in all tests without any parameter tuning, thereby avoiding the trial‑and‑error search for penalty parameters.

(2) $\mathbf{Accuracy \ and \ robustness:}$ In the vast majority of cases, the accuracy of NatDRM reaches or exceeds that of optimally tuned DRM and PINN, while exhibiting stronger robustness to the choice of activation functions and optimizers.

(3) $\mathbf{Computational \ efficiency:}$ While the proposed NatDRM exhibits a longer single-run time, it eliminates the tuning overhead of multiple trials, making its total time competitive; further performance improvements can be achieved through subproblem parallelization.

(4) $\mathbf{High‑dimensional \ applicability:}$ NatDRM maintains stable convergence and acceptable accuracy in six‑dimensional spaces, laying a foundation for tackling even higher‑dimensional problems. The successful treatment of variable‑coefficient and semilinear equations further demonstrates its versatility.

\section{Concluding remarks}
\label{sec:5}

In this paper, we have developed a unified framework that systematically extends the Natural Deep Ritz Method to high‑dimensional Poisson equations with Dirichlet boundary conditions. By exploiting the de Rham complex and its dual structure, the original essential boundary value problem is decomposed into three purely natural boundary value subproblems, thereby completely avoiding the use of boundary penalty terms. Based on this construction, we have presented loss functions suitable for any dimension \(d \geq 2\), enabling direct deployment in three‑, four‑, and even six‑dimensional spaces. Comprehensive numerical experiments on 3D, 4D, and 6D benchmarks, including constant‑coefficient Poisson, variable‑coefficient elliptic, and semilinear Poisson equations, fully validate the effectiveness of the proposed method.

We emphasize again that the primary advantage of our method lies in its complete elimination of boundary penalty parameter tuning. Standard DRM and PINN require trial‑and‑error selection of the penalty coefficient \(\beta\), and the optimal value varies depending on the specific problem, optimizer, and network architecture. For instance, in the 6D case, DRM with most \(\beta\) values fails to yield meaningful accuracy and converges only for specific \(\beta\), whereas NatDRM converges stably without any manual tuning and attains accuracy that is in the same order of magnitude as the optimally tuned performance of the other two methods. This parameter‑free property greatly simplifies the training pipeline and avoids the pitfalls of excessive stiffness from large penalties or insufficient boundary enforcement from small penalties.

Thanks to the subproblem decomposition strategy, NatDRM exhibits almost synchronous decay of interior and boundary errors during training, resulting in a well‑balanced and stable optimization process that is more robust to the choice of activation functions. The ReCUr activation consistently delivers the best performance, and its advantage becomes even more prominent in high dimensions. In terms of computational efficiency, although the single‑run time of NatDRM is higher than that of a single run of DRM or PINN due to the need to train three subnetworks simultaneously, the total cost becomes clearly competitive when accounting for the multiple trial runs typically required by DRM and PINN to locate a suitable penalty parameter. This advantage is further amplified as the dimensionality and model size increase, and explicit parallelization of the subproblems can substantially reduce the single‑run time.

Beyond constant‑coefficient Poisson equations, the numerical experiments on the variable‑coefficient problem and the semilinear problem demonstrate that NatDRM readily handles more complex differential operators and nonlinearities without any modification to the core penalty‑free decomposition. The visualizations in the figures confirm that the solution quality remains high, with errors distributed relatively uniformly across the domain. This demonstrates that NatDRM applied to six‑dimensional problems does not suffer from excessive boundary errors, and the interior approximation remains accurate. This versatility suggests that NatDRM can serve as a general‑purpose solver for a wide class of essential boundary value problems.

Building on the unified penalty-free NatDRM framework validated for 3D–6D problems, three key directions are pursued to enhance its scalability and generality:

(1) Scalable computation and  benchmarking: We will develop a distributed parallel training architecture leveraging the natural decoupling of the three subproblems, conduct systematic dimension-scaling experiments up to 20D, and establish a fair total-cost evaluation framework for rigorous comparison with alternative boundary treatments.

(2) Efficient high-dimensional potential representation: We will investigate structured parameterization techniques, including tensor decompositions of low rank and sparse representations that incorporate antisymmetry constraints, to reduce the computational burden of potentials of tensor type without sacrificing the accuracy of numerical solutions.

(3) Generalization to broader PDE classes: We will extend the natural boundary decomposition framework to handle multiscale problems with discontinuous coefficients/interface conditions and non-self-adjoint operators, and develop advanced nonlinear solvers for strongly nonlinear problems.




\bibliographystyle{siamplain}
\bibliography{references}
\end{document}

%% file: ex_shared.tex
\usepackage{lipsum}
\usepackage{amsfonts}
\usepackage{graphicx}
\usepackage{epstopdf}
\usepackage{algorithmic}
\ifpdf
  \DeclareGraphicsExtensions{.eps,.pdf,.png,.jpg}
\else
  \DeclareGraphicsExtensions{.eps}
\fi

\newsiamremark{remark}{Remark}
\newsiamremark{hypothesis}{Hypothesis}
\crefname{hypothesis}{Hypothesis}{Hypotheses}
\newsiamthm{claim}{Claim}
\newsiamremark{fact}{Fact}
\crefname{fact}{Fact}{Facts}

\headers{Penalty-Free Natural Deep Ritz Method}{J. Chen, X. Ji, H. Yu, and S. Zhang}

\title{Penalty-Free Natural Deep Ritz Method Based on \\ de Rham Complex for High-Dimensional Dirichlet Boundary Value Problems\thanks{Submitted to the editors DATE.
\funding{This work was partially supported by the National Natural Science Foundation of China (NSFC) under grant numbers 92370205, 12494543, 12271512, and 12371389, and by the Strategic Priority Research Program of the Chinese Academy of Sciences under grant numbers XDA0480504 and XDB0640000.}}}

\author{Jiarong Chen\thanks{School of Mathematics and Statistics, Beijing Institute of Technology, 100081, Beijing, China
  (\email{3120241504@bit.edu.cn}).}
\and Xia Ji\footnotemark[2] \thanks{Beijing Key Laboratory on MCAACI, Beijing Institute of Technology, 100081, Beijing, China (\email{jixia@bit.edu.cn}).}
\and Haijun Yu\footnotemark[5] \thanks{State Key Laboratory of Mathematical Sciences (SKLMS) and State Key Laboratory of Scientific and Engineering Computing (LSEC), Institute of Computational Mathematics and Scientific/Engineering Computing, Academy of Mathematics and Systems Science, Chinese Academy of Sciences, 100190, Beijing, China (\email{hyu@lsec.cc.ac.cn}).}
\and Shuo Zhang\footnotemark[4] \thanks{School of Mathematical Sciences, University of Chinese Academy of Sciences, 100049, Beijing, China (\email{szhang@lsec.cc.ac.cn}).} 
}

\usepackage{amsopn}
